\newcommand{\fl}{\longrightarrow}
\newfont{\bb}{msbm10 at 12pt}
\def\s{\hbox{\bb S}}
\def\r{\mathbb{R}}
\def\n{{\cal N}}
\def\u{{\cal U}}
\newcommand{\ee}{\begin{equation}}
\newcommand{\fe}{\end{equation}}
\begin{document}

\theoremstyle{plain}\newtheorem{lem}{Lemma}
\theoremstyle{plain}\newtheorem{pro}{Proposition}
\theoremstyle{plain}\newtheorem{teo}{Theorem}
\theoremstyle{plain}\newtheorem{eje}{Example}[section]
\theoremstyle{plain}\newtheorem{no}{Remark}
\theoremstyle{plain}\newtheorem{cor}{Corollary}
\theoremstyle{plain}\newtheorem{defi}{Definition}

\begin{center}
\rule{15cm}{1.5pt} \vspace{.6cm}

{\Large \bf Entire solutions of the degenerateMonge-Ampère\\[3mm] equation with a finite number of singularities.}\\[5mm]

\rule{15cm}{1.5pt}\vspace{.5cm}\\

José A. Gálvez$^{a}$, Barbara Nelli$^b\ $\footnote{The authors were partially supported by INdAM-GNSAGA, PRIN-2010NNBZ78-009, MICINN-FEDER Grant No. MTM2013-43970-P, and Junta de Andalucía
Grant No. FQM325.}\vspace{0.8cm}
\end{center}

\noindent $\mbox{}^a$ Departamento de Geometr\'{\i}a y Topolog\'{\i}a, Facultad de Ciencias, Universidad de Granada,
E-18071 Granada, Spain; e-mail: jagalvez@ugr.es \vspace{0.2cm}

\noindent $\mbox{}^b$ Dipartimento Ingegneria e Scienze dell'Informazione e Matematica, Universitá di L'Aquila, via Vetoio - Loc. Coppito, E-67010 L'Aquila, Italy;
e-mail: nelli@univaq.it \vspace{0.3cm}

\begin{abstract}
We determine the global behavior of every ${\cal C}^2$-solution to the two-dimensional degenerate Monge-Ampère equation, $u_{xx}u_{yy}-u_{xy}^2=0$, over the finitely punctured plane. With this, we classify every solution in the once or twice punctured plane. Moreover, when we have more than two singularities, if the solution $u$ is not linear in a half-strip, we obtain that the singularities are placed  at  the vertices of a convex polyhedron $P$ and the graph of $u$ is made by pieces of cones outside of $P$ which are suitably glued along the sides of the polyhedron.
Finally, if we  look for analytic solutions, then there is at most one singularity and the graph of $u$ is either a cylinder (no singularity) or a cone (one singularity).
\end{abstract}

\noindent Mathematics Subject Classification: 35K10, 53C21, 53A05. \\

\section{Introduction.}

A celebrated result proved by A. V. Pogorelov \cite{P}, and independently by P. Hartman and L. Nirenberg \cite{HN}, states that all the global solutions to the degenerate Monge-Ampère equation
\begin{equation}
\label{the-equation}
u_{xx}u_{yy}-u_{xy}^2=0,
\end{equation}
where $u:\r^2\fl\r$ is a function of class ${\cal C}^2$,  are given by
$$
u(x,y)=\alpha(x)+c_0\,y,
$$
up to a rotation in the $(x,y)$-plane,  that is,  the graph of $u$   is a cylinder (see also \cite{S,Sa1}).

This degenerate Monge-Ampère equation has been extensively studied from an analytic point of view and also from a geometric point of view since the graph of every solution determines a flat surface in the Euclidean 3-space.

Local properties of the solutions of (\ref{the-equation}) have been analyzed in many papers (see, for instance, \cite{Sa2,U} and references therein). Our objective is to study the global behavior of the solutions of (\ref{the-equation}) for the largest non simply-connected domains, that is, in the finitely punctured plane.

Observe that, when the Monge-Ampère equation is elliptic or hyperbolic, a large amount of work in the understanding of these solutions in the punctured plane has been achieved from a local and global point of view (see, among others, \cite{ACG, B1,B2, GHM,GJM,GMM, GM, HB, JX, J,Mi1,Mi2,SW}).

The paper is organized as follows. After a first section of preliminaries, in Section \ref{s3} we establish the global behavior of every ${\cal
C}^2$ solution $u:\r^2\backslash\{p_1,\ldots,p_n\}\fl\r$ to the Monge-Ampère equation (\ref{the-equation}) with isolated singularities at the points $p_i$. We show that for every singular point $p_i$ there exists at least a sector $S_i\subseteq\r^2\backslash\{p_1,\ldots,p_n\}$ such that the graph of $u$ over $S_i$ is a piece of a cone. Moreover, there exists at most a maximal strip $S$ such that the graph of $u$ over $S$ is a cylinder. As a consequence of these results, we obtain that if $u$ is an analytic solution then there is at most one singularity, and the graph of $u$ is either a cylinder if there is no singularity or a cone if there is one singularity.

In Section \ref{s4} we classify all solutions to (\ref{the-equation}) with one or two singularities. In particular, if  $u$ is a solution with one singularity at $p_0$ then, either the graph of $u$ is a cone or there exists a line $r$ containing to $p_0$ such that the graph of $u$ is a cylinder over one half-plane determined by $r$ and a cone over the other half-plane. We also describe every solution in the twice punctured plane.

Although the behavior of a solution $u$ to (\ref{the-equation}) with more than two singularities can be complicated, due to the existence of some regions in the $(x,y)-$plane where $u$ is a linear function, we classify in Section \ref{s5} every solution which does not admit a large region where $u$ is linear, that is, every solution such that the domain where $u$ is linear does not contain a half-strip of $\r^2$. In such  case, we show that the singular points are the vertices of a convex compact polyhedron $C\subseteq\r^2$ with non empty interior, the solution $u$ must be linear over $C$ and the graph of $u$ is made by some cones over $\r^2\backslash C$ which are suitably glued along the segments of the boundary of the planar polyhedron $u(C)$.

\section{Preliminaries.}

Let $\Omega\subseteq\r^2$ be a domain, and $u:\Omega\fl\r$ be a function of class ${\cal C}^2$ satisfying the degenerate Monge-Ampère equation
$u_{xx}u_{yy}-u_{xy}^2=0,$ in $\Omega.$
As we mentioned in the introduction, it is well known that every solution $u(x,y)$ to the degenerate Monge-Ampère equation in the whole plane $\r^2$ is given by
$u(x,y)=\alpha(x)+c_0\,y,$ up to a rotation in the $(x,y)$-plane (see \cite{HN, Ma, P}), that is, its graph is a cylinder.

Thus, it is natural to study the solutions to the degenerate Monge-Amp\`ere equation in the possible largest domains. In other words, we consider solutions to (\ref{the-equation}) in the non simply-connected domains $\r^2\backslash\{p_1,\ldots,p_n\}$. Here, we assume $u$ has an {\it isolated singularity} at any $p_i$, that is, $u$ cannot be ${\cal C}^2$-extended to $p_i$.

For the study of these solutions, we will need the following result which is a consequence of \cite[Lemma 2]{HN}.

\begin{lem}\label{HN}
Let $u(x,y)$ be a ${\cal C}^2$ solution to the degenerate Monge-Ampère equation (\ref{the-equation}) and $(x_0,y_0)\in\Omega$ a point where the Hessian matrix of $u$ does not vanish. Then, there exists a line $r$ in the $(x,y)-$plane such that the connected component $r_0$ of $r\cap\Omega$ containing $(x_0,y_0)$ satisfies:
\begin{enumerate}
\item $r_0$ is made of points with non vanishing Hessian matrix,
\item the gradient of $u$ is constant on $r_0$.
\end{enumerate}
Moreover, there exists an open set $U\subseteq\Omega$ containing $r_0$ such that if a point $(x_1,y_1)\in U$ has the same gradient as a point of $r_0$ then $(x_1,y_1)\in r_0$.
\end{lem}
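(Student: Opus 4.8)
The plan is to exploit that the equation (\ref{the-equation}) says precisely that the Hessian matrix $H$ of $u$ has vanishing determinant, so at $(x_0,y_0)$, where $H\neq 0$, it has rank exactly one. I would first record the algebraic consequence: the kernel of $H$ is one-dimensional and, after a rotation making $u_{xx}(x_0,y_0)\neq 0$, it is spanned by $\xi=(u_{xy},-u_{xx})$, as one checks directly from $u_{xy}^2=u_{xx}u_{yy}$. By continuity $H$ is nonzero, hence of rank one, on a whole neighborhood $V$ of $(x_0,y_0)$, so $\xi$ is a continuous line field on $V$. The decisive observation is that the gradient map $\Phi=(u_x,u_y)$ has Jacobian exactly $H$; thus $D_\xi\Phi=H\xi=0$, i.e. the gradient is infinitesimally constant in the null direction.

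The core of the statement --- that this infinitesimal constancy integrates to a genuine straight segment $r_0$ along which $\nabla u$ is literally constant --- is exactly the content of \cite[Lemma 2]{HN}, and this is the step I expect to be the real obstacle. The difficulty is one of regularity: the natural idea is to follow the integral curve $x=f(y)$ of $\xi$ through $(x_0,y_0)$ and show $f''=0$ by differentiating the slope $-u_{xy}/u_{xx}$, but with only ${\cal C}^2$ data the required third derivatives do not exist. Hartman--Nirenberg circumvent this, and I would invoke their lemma to obtain the line $r$ together with the constancy of $\nabla u$ on the component $r_0$ through $(x_0,y_0)$. Item 1 then follows because $r_0$ lies in the rank-one region: near $(x_0,y_0)$ this is the neighborhood $V$ above, and the Hartman--Nirenberg segment is constructed so as to remain where $H$ (equivalently, the second fundamental form of the graph of $u$) does not degenerate.

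For item 2 and the ``moreover'' clause I would argue directly, without appealing to the hard straightness. Assuming $u_{xx}\neq 0$ near a point $A$ of $r_0$ and writing $\nabla u(A)=(p_0,q_0)$, the implicit function theorem presents the level set $\{u_x=p_0\}$ as a single ${\cal C}^1$ curve $x=f(y)$ through $A$. Its tangent is orthogonal to $\nabla u_x=(u_{xx},u_{xy})$, hence proportional to $\xi=(u_{xy},-u_{xx})$, so this level curve is the null characteristic; differentiating $u_y(f(y),y)$ and using $u_{xy}^2=u_{xx}u_{yy}$ shows that $u_y$ is constant along it as well. Thus, near $A$, the set $\{\nabla u=(p_0,q_0)\}$ coincides with this curve, which is part of $r_0$. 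This simultaneously yields the constancy of the gradient (item 2) and shows that $\nabla u$ is locally injective transverse to $r_0$, since the transverse derivative $D_\eta\Phi=H\eta\neq 0$ for $\eta$ spanning the non-null eigendirection of $H$. Choosing such a neighborhood $U_A$ for each $A\in r_0$ and setting $U=\bigcup_{A\in r_0}U_A$, any point of $U$ with gradient $(p_0,q_0)$ lies in some $U_A$ and therefore on $r_0$, which is the desired assertion.
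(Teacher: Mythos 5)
Your proposal is consistent with what the paper actually does: the paper offers no proof of this lemma, presenting it simply as ``a consequence of \cite[Lemma 2]{HN}'', and you likewise (and correctly) identify the straightness of the null characteristic through $(x_0,y_0)$ as the one step that genuinely requires Hartman--Nirenberg, since the naive ODE argument along the kernel direction $(u_{xy},-u_{xx})$ would need third derivatives that a ${\cal C}^2$ solution need not have. The material you add on top of the citation is correct and is in fact what is needed to pass from Hartman--Nirenberg's statement to the form used here: the rank-one structure of the Hessian, the computation $\frac{d}{dy}\,u_y(f(y),y)=(u_{xx}u_{yy}-u_{xy}^2)/u_{xx}=0$ along the level curve $\{u_x=p_0\}$, and the implicit-function-theorem argument showing that near each point $A$ of $r_0$ the fibre of the gradient map is exactly that level curve, which yields both item 2 and the ``moreover'' clause. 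One small point worth making explicit in your write-up: the hypothesis $u_{xx}\neq0$ near $A$ can be arranged by a \emph{single} rotation valid along all of $r_0$, because differentiating the constant gradient along $r_0$ shows that the kernel of the (rank-one) Hessian at every point of $r_0$ is the fixed direction of $r_0$ itself, so it suffices to choose the $x$-axis transverse to $r_0$; this also justifies applying the implicit function theorem at every point of $r_0$ simultaneously when you form $U=\bigcup_{A\in r_0}U_A$.
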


From now on, given a solution $u$ to (\ref{the-equation}) in $\Omega$, we will denote by $\n$ the open set given by the points in $\Omega$ whose Hessian matrix does not vanish, and by $\u=\Omega\backslash\n$.

Note that the graph $\Sigma$ of a solution $u$ to (\ref{the-equation}) corresponds, from a geometric point of view, to a flat surface in $\r^3$, and the points
$$
\Sigma_{\n}:=\{(p,u(p)):\ p\in\n\},\qquad \Sigma_{\u}:=\{(p,u(p)):\ p\in\u\},
$$
correspond, respectively, to the non umbilical points and to the umbilical points of the graph $\Sigma$.

Given a point $p\in\n$ we will denote by $r(p)$ the piece of line $r_0$ determined by Lemma \ref{HN}. Moreover, since the gradient of $u$ is constant on $r(p)$, the image of $r(p)$ given by
$$
R(p):=\{(q,u(q))\in\r^3:\ q\in r(p)\}
$$
is a piece of line in $\r^3$.

Thus, the previous lemma asserts that $\Sigma_{\n}$ is made of pieces of pairwise disjoint lines of $\r^3$. In addition, every connected component of $\Sigma_{\u}$ with an interior point is contained in a plane, because its Hessian matrix vanishes identically.

A first consequence of the previous lemma is that each solution of the degenerate Monge-Ampère equation can be continuously extended to the singularity because the norm of its gradient is bounded around the singularity.

\begin{lem}\label{l2}
Let $\Omega\subseteq\r^2$ be a domain, $p_0\in\Omega$ and $u$ be a ${\cal C}^2$ solution of (\ref{the-equation}) in $\Omega\backslash\{p_0\}$. Then the modulus of the gradient of $u$ is bounded in a neighborhood of $p_0$. In particular, $u$ can be continuously extended to $p_0$.
\end{lem}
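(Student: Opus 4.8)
The plan is to prove the stronger pointwise bound $\abs{\nabla u(q)}\le M$ for every $q$ in a small punctured disk about $p_0$, where $M$ is the maximum of $\abs{\nabla u}$ on the bounding circle. Fix $\rho>0$ with $\overline{D}\subseteq\Omega$, where $D:=D(p_0,\rho)$, and set $M:=\max_{\partial D}\abs{\nabla u}$, which is finite since $\partial D$ is compact and $u\in{\cal C}^2$ near it. Writing $\Omega':=\Omega\backslash\{p_0\}$ and splitting $\Omega'=\n\sqcup\u$ as above, I would treat the three possibilities for a point $q\in D\backslash\{p_0\}$ separately, according to whether $q\in\n$, $q\in\overline{\n}\backslash\n$, or $q\in\u\backslash\overline{\n}$.

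First, if $q\in\n$, I apply Lemma \ref{HN} in $\Omega'$: the gradient of $u$ is constant along the segment $r(q)$, the connected component of $r\cap\Omega'$ through $q$. The full line $r$ meets $D$ in a chord, and since the single removed point $p_0$ lies on at most one side of $q$ along $r$, at least one of the two subsegments from $q$ to $\partial D$ avoids $p_0$ and hence lies in $\Omega'$, so it belongs to $r(q)$. Its endpoint $q'\in\partial D$ is a limit of points of $r(q)\subseteq\n$, so by continuity $\nabla u(q')=\nabla u(q)$, giving $\abs{\nabla u(q)}=\abs{\nabla u(q')}\le M$.

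Second, if $q\in\overline{\n}\backslash\n$, then $q$ is a limit of points of $\n$, each with gradient bounded by $M$ by the previous step, so $\abs{\nabla u(q)}\le M$ by continuity of $\nabla u$. Third, if $q\in\u\backslash\overline{\n}$, then $q$ has a neighborhood disjoint from $\n$, hence $q$ lies in a connected component $V$ of $\mathrm{int}(\u)$, on which $u$ is affine and $\nabla u$ equals a constant vector $v_0$. If $V=\Omega'$, then $\nabla u\equiv v_0$ reaches $\partial D$ and $\abs{v_0}\le M$; otherwise, since $\Omega'$ is connected, $V$ has a boundary point $w$ lying in $\Omega'$, which cannot lie in $\mathrm{int}(\u)$ (else it would belong to $V$) and therefore lies in $\partial\u=\partial\n\subseteq\overline{\n}$. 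By the second step $\abs{\nabla u(w)}\le M$, while continuity together with $w\in\overline{V}$ gives $\nabla u(w)=v_0$; hence $\abs{\nabla u(q)}=\abs{v_0}\le M$. This establishes $\abs{\nabla u}\le M$ on $D\backslash\{p_0\}$, which is the boundedness assertion.

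For the continuous extension, the gradient bound makes $u$ locally Lipschitz: any two points near $p_0$ can be joined inside $D\backslash\{p_0\}$ by a path of length comparable to their distance (going around $p_0$ if necessary), so $u$ is uniformly continuous on $D\backslash\{p_0\}$ and the Cauchy criterion yields a continuous extension to $p_0$. I expect the main obstacle to be the third case: the ruling structure of Lemma \ref{HN} only controls the gradient on $\n$, and one must argue that the locally constant gradient on an affine piece of the degenerate set $\u$ is pinned to a value already bounded on $\n$. The point making this work is the elementary topological fact that a boundary point of a connected component of $\mathrm{int}(\u)$ lies in $\partial\u=\partial\n\subseteq\overline{\n}$, which lets continuity transport the bound from the non-degenerate region.
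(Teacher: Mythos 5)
Your argument is correct and follows essentially the same route as the paper's proof: bound the gradient on $\n\cap D$ by following the ruling $r(q)$ out to $\partial D$, pass to $\overline{\n}$ by continuity, and anchor the constant gradient of each affine component of $\mathrm{int}(\u)$ at a boundary point lying in $\overline{\n}$ (or on $\partial D$). The only point to tidy --- present in the paper's own write-up as well --- is that in your third case the boundary point $w$ of $V$ must be chosen inside $D$ (e.g.\ by working with the component of $\mathrm{int}(\u)\cap(D\backslash\{p_0\})$ containing $q$), since the bound from your first two steps has only been established for points of $\overline{\n}$ that lie in $D$.
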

\begin{proof}
Consider $\varepsilon>0$ such that the closed disk $\overline{D}_{p_0}(\varepsilon)$ centered at $p_0$ with radius $\varepsilon$ is contained in $\Omega$. Let
$$
m=\max\{\|grad_p(u)\|:\ \|p-p_0\|=\varepsilon\},
$$
where $grad_p(u)$ denotes the gradient of $u$ at a point $p$.

Let us see that the modulus of the gradient of $u$ is less than or equal to $m$ for every point $p\in\overline{D}_{p_0}(\varepsilon)\backslash\{p_0\}$.

Let $p\in\n$, then the piece of line $r(p)$ has at least a point $q$ in the boundary of the disk $\overline{D}_{p_0}(\varepsilon)$. So, from Lemma \ref{HN} one has that $\|grad_p(u)\|=\|grad_q(u)\|\leq m$. Hence, the previous inequality happens in the closure of $\n$.

On the other hand, if $p$ is a point in the interior of $\u$, we can  consider the connected component $\u_0$ of $\u$ to which $p$ belongs. The Hessian matrix of $u$ vanishes identically in $\u_0$, so, the gradient of $u$ is constant in the closure of $\u_0$. Moreover, since the closure of $\u_0$ intersects the closure of $\n$, we obtain that $\|grad_p(u)\|\leq m$, as we wanted to show.
\end{proof}

\section{Global behaviour of the graphs.}\label{s3}

Now, consider a ${\cal C}^2$ solution $u:\r^2\backslash\{p_1,\ldots,p_n\}\fl\r$ to the degenerate Monge-Ampère equation (\ref{the-equation}) with singularities at the points $p_i$. Let $p\in\n$, then from the previous considerations, we know that $r(p)$ must be:
\begin{enumerate}
\item a line, or
\item a half-line with end point at a singular point $p_i$, or
\item a segment with end points at two different singular points $p_i,p_j$.
\end{enumerate}

As a consequence we obtain the following result.

\begin{pro}\label{max-strip}
Let $u:\r^2\backslash\{p_1,\ldots,p_n\}\fl\r$ be a ${\cal C}^2$ solution of (\ref{the-equation}) with isolated singularities at the points $p_i$. Assume there exists a point $p\in\n$ such that $r(p)$ is a line and let $S(p)\subseteq\r^2\backslash\{p_1,\ldots,p_n\}$ be the maximal open strip containing $r(p)$. Then $u$ is given in $S(p)$ by
\ee\label{cilindro}
u(x e_1+y e_2)=\alpha(x)+v_0\,y,\qquad y\in\r
\fe
for a ${\cal C}^2$ function $\alpha(x)$, a constant $v_0\in\r$, and an orthonormal basis $\{e_1,e_2\}$ of $\r^2$ where $e_2$ is parallel to $r(p)$.

Moreover,
\begin{enumerate}
\item[1)] if $q\in\n$ satisfies that $r(q)$ is a line then $r(q)\subseteq S(p)$,
\item[2)] if $r$ is a line contained in $\u$ then $r\subseteq S(p)$.
\end{enumerate}
\end{pro}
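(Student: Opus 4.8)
\emph{The plan.} First I would choose coordinates $(x,y)=xe_1+ye_2$ with $e_2$ parallel to $r(p)$ and, after a translation, normalize $r(p)=\{x=0\}$. Since by Lemma \ref{HN} the gradient is constant along $r(p)$, say $\nabla u\equiv(A,v_0)$ there, the restriction $u(0,y)=u(0,0)+v_0\,y$ is affine and the tangent plane of $\Sigma$ along the complete straight ruling $R(p)$ is the fixed plane $\Pi:\ z=u(0,0)+Ax+v_0y$. Writing $S(p)=(a,b)\times\r$ (each finite endpoint lying on a singularity by maximality, and $a=-\infty$ or $b=+\infty$ otherwise), the conclusion \eqref{cilindro} is equivalent to $u_{xy}\equiv u_{yy}\equiv 0$ on $S(p)$: indeed $u_{yy}\equiv 0$ gives $u_y=u_y(x)$, and $u_{xy}\equiv 0$ gives $u_y\equiv\mathrm{const}=v_0$, whence $u=\alpha(x)+v_0y$ with $\alpha\in\mathcal C^2$ because $u$ is. Since on $\u$ both second derivatives vanish identically, everything reduces to showing that \emph{every ruling meeting the open strip is parallel to $e_2$}, i.e.\ that at each $q\in\n\cap S(p)$ the kernel of the Hessian is spanned by $e_2$.

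\emph{Non-crossing of rulings.} The key tool is that distinct rulings are disjoint in $\n$: by the ``Moreover'' part of Lemma \ref{HN}, through a point of $\n$ passes a unique maximal segment of constant gradient, so if two rulings shared a point $w\in\n$ they would coincide near $w$. Consequently a ruling $r(q)$, $q\in\n\cap S(p)$, cannot reach a point of $r(p)$ while staying in $\n$, for such a common point would lie in $r(p)\cap r(q)\subseteq\n$ and force $r(q)$ to be vertical. Now suppose $r(q)$ is \emph{not} parallel to $e_2$; extending it from $q$ towards the axis $\{x=0\}$, the connecting segment stays inside the strip and hence meets no singularity, so $r(q)$ can neither hit a $p_i$ nor reach $r(p)$ in $\n$. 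The only remaining possibility is that $r(q)$ terminates, before reaching $\{x=0\}$, at a point $z\in\u$ where the Hessian degenerates.

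\emph{Disposing of the umbilic obstruction.} This is the delicate point and, I expect, the main obstacle. Along $\overline{r(q)}$ the gradient equals the constant $g=\nabla u|_{r(q)}$, so by continuity $\nabla u(z)=g$ and the tangent plane of $\Sigma$ at $z$ is the one with gradient $g$. The idea is to propagate this into the flat piece of $\u$ abutting $z$: on each component of $\u$ with interior the graph is planar, so $u$ is affine with a constant gradient, and matching this affine piece with the cylindrical behavior already governed by $\Pi$ along $r(p)$ should force $g=(A,v_0)$ and the offending ruling to be vertical after all, a contradiction. Carrying this out requires a careful description of how $\partial\n$ meets the strip, together with the continuous extension of $u$ and $\nabla u$ across $\overline{\u}$ (Lemma \ref{l2}, and constancy of $\nabla u$ on the closure of each flat component). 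Once no non-vertical ruling can enter $S(p)$, we obtain $u_{xy}\equiv u_{yy}\equiv 0$ on $S(p)$ and hence \eqref{cilindro}.

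\emph{The two addenda.} For 1), if $q\in\n$ has $r(q)$ a complete line, then $r(q)$ and $r(p)$ are disjoint rulings, i.e.\ two disjoint lines of the plane, hence both parallel to $e_2$; since $r(q)$ carries a constant gradient and, by the non-crossing and parallelism above, cannot be separated from $r(p)$ by a boundary singularity without contradicting the cylindrical structure just established on both sides, maximality of $S(p)$ gives $r(q)\subseteq S(p)$. For 2), a complete line $r\subseteq\u$ lies in the closure of a flat component, along which $u$ is affine with constant gradient and constant tangent plane; the same parallelism-plus-maximality argument then places $r\subseteq S(p)$. In both cases the essential inputs are the disjointness and forced parallelism of rulings together with the control of $\u$ obtained in the first three steps.
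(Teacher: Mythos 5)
There are two genuine gaps. The first is at the point you yourself flag as ``the delicate point'': the possibility that a non-parallel ruling $r(q)$ terminates at an umbilic point $z\in{\cal U}$ before reaching $r(p)$. You only sketch an idea for excluding this and admit it is not carried out; but in fact the obstruction does not exist, because Lemma \ref{HN} defines $r_0$ as the \emph{connected component of $r\cap\Omega$}, i.e.\ maximal in $\Omega$ and not merely in ${\cal N}$, and asserts that this whole component has non-vanishing Hessian. Hence a ruling can only end at a singular point $p_i$ or go to infinity (this is exactly the trichotomy stated at the start of Section \ref{s3}), and since the segment from $q$ to $r(q)\cap r(p)$ lies in the convex, singularity-free strip, $r(q)$ must actually reach $r(p)$ and coincide with it --- which is the paper's one-line argument. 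So this part of your proof is repairable, but as written it replaces a correct reading of Lemma \ref{HN} by an unresolved difficulty. (Your reduction to $u_{xy}\equiv u_{yy}\equiv 0$ is fine and even slightly cleaner than the paper's route, which first obtains $u=\alpha(x)+v(x)y$ and then kills $v'$ with the equation.)

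The second gap is more serious: assertions 1) and 2) are essentially unproved. Your argument is ``$r(q)$ and $r(p)$ are disjoint lines, hence parallel; maximality plus the cylindrical structure on both sides gives $r(q)\subseteq S(p)$.'' Parallelism is correct, but nothing you have established forbids a parallel full ruling on the far side of a boundary line of $S(p)$; the cylindrical structure has been established only \emph{inside} $S(p)$, so there is no ``both sides'' to appeal to yet. The paper's proof of 1) requires real work: one takes the boundary line $\partial S_0$ of $S(p)$ separating $r(q)$ from $S(p)$, which by maximality contains a singular point $p_{i_0}$; one shows the half-disk $D^+_{p_{i_0}}(\varepsilon)$ beyond $\partial S_0$ must contain a point $a\in{\cal N}$ (otherwise the graph is planar there, and gluing a plane to the cylinder (\ref{cilindro}) along $\partial S_0$ forces $p_{i_0}$ to be singular if and only if the whole segment $D_{p_{i_0}}\cap\partial S_0$ is, contradicting finiteness of the singular set); one then shows $r(a)$ is a full line parallel to $r(p)$, builds the second maximal strip $S(a)$, and derives the same ``singular iff the entire common boundary line is singular'' contradiction for the two cylinders glued along $\partial S(p)\cap\partial S(a)$. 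None of this construction, nor any substitute for it, appears in your proposal, and without it the containment $r(q)\subseteq S(p)$ (and likewise 2)) does not follow.
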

\begin{proof}
Let $q$ be a point in the open strip $S(p)$, with $q\in\n$. Since $S(p)$ has no singular point then $r(q)$ intersects $r(p)$ or $r(q)$ is parallel to $r(p)$. From Lemma \ref{HN}, in the former case $r(q)=r(p)$ and in the latter case $r(q)$ must be a line contained in $S(p)$.

Now, let $r_0$ be a parallel line to $r(p)$ contained in $S(p)$. Then, from the previous discussion we have obtained that  $r_0\subseteq\n$ or $r_0\subseteq\u$. As a consequence, if $r_0$ has a point $q$ in the interior of $\u$, $int(\u)$, then $r_0$ is contained in $int(\u)$.

Hence, if $r_0=r(q)\subseteq\n$ then the image of $r_0$ is the line $R(q)$, and if $r_0\subseteq int(\u)$ then the image of $r_0$ is a line, because the image of each connected component of $int(\u)$ is a piece of a plane. Thus, the image of  $r_0$ must also be a line if  $r_0\subseteq \partial\n=\partial (int(\u))$.

Therefore, $u$ is given in $S(p)$ by
$$
u(x e_1+y e_2)=\alpha(x)+v(x)\,y,\qquad y\in\r
$$
for certain functions $\alpha(x), v(x)$, and $\{e_1,e_2\}$ orthonormal basis of $\r^2$, with $e_2$ parallel to $r(p)$. In addition, from (\ref{the-equation}), $v(x)$ must be a constant $v_0\in\r$.


Let us prove now the case 1), that is, assume $q\in\n$ such that $r(q)$ is a line and see that $r(q)\subseteq S(p)$. If $r(q)\not\subseteq S(p)$, then $r(q)$ is parallel to $r(p)$ because otherwise $r(q)$ intersects $r(p)$, and so $r(p)=r(q)$.

Denote by $S_0$ the unique connected component of $\r^2\backslash S(p)$ containing $r(q)$. Observe   that $S_0$ is a closed half-plane of $\r^2$, parallel to $r(p)$, so that its boundary $\partial S_0$ is a line containing at least a singular point $p_{i_0}$, due to the maximality of $S(p)$.

Let $\varepsilon>0$ such that the open disk $D_{p_{i_0}}(\varepsilon)$ centered at $p_{i_0}$ with radius $\varepsilon$ does not contain another singular point, and $D_{p_{i_0}}(\varepsilon)\cap r(q)=\emptyset$. Moreover, since the number of singular points is finite,  we can choose $\varepsilon$ small enough so that if $r$ is any line parallel to $r(p)$ intersecting $D_{p_{i_0}}(\varepsilon)$ then $r=\partial S_0$ or $r$ has no singular point.

The set $D_{p_{i_0}}^+(\varepsilon)=D_{p_{i_0}}(\varepsilon)\cap int(S_0)$ must contain some point $a\in\n$. Otherwise, the image of $D_{p_{i_0}}^+$ would be a piece of a plane, and from (\ref{cilindro}) one has that $p_{i_0}$ is a singular point if, and only if, every point in $D_{p_{i_0}}\cap\partial S_0$ is singular. This is a contradiction which claims the existence of the previous point $a$.

Since $\n$ is an open set and the set of singularities is finite, we can assume that $r(a)$ is not a segment joining two singular points. So, $r(a)$ is a half-line or a line which does not intersects $r(p)$ or $r(q)$, that is, $r(a)$ must be parallel to $r(p)$. Then, from the choice of $\varepsilon$, one has that $r(a)$ is also a line parallel to $r(p)$.

Finally, if we choose the maximal strip $S(a)\subseteq\r^2\backslash\{p_1,\ldots,p_n\}$ containing the line $r(a)$, we obtain from the choice of $\varepsilon$ that $\partial S(p)\cap\partial S(a)$ is the line parallel to $r(p)$ containing $p_{i_0}$. But this is not possible, because in this case one has from (\ref{cilindro}) that  $p_{i_0}$ is  a singular point if and only if every point in $\partial S(p)\cap\partial S(a)$ is singular.

Therefore, there is no $q\in\n$ such that $r(q)$ is a line which is not contained in $S(p)$.

In the case 2), the line $r\subseteq\u$ must be parallel to $r(p)$ because otherwise $r\cap r(p)\neq\emptyset$, contradicting Lemma \ref{HN}. So, this case can be exactly proven as the previous case replacing $r(q)$ by $r$.
\end{proof}

The previous result asserts that the graph of $u$ over $S(p)$ is a cylinder in $\r^3$ foliated by lines, all of them parallel to $R(p)$. Observe that this cylinder has non umbilical points, and probably umbilical points as well. In particular, if there is no singular point then $S(p)=\r^2$ and $u$ is globally determined by (\ref{cilindro}).

\begin{defi}{\em 
We say that $u$ is a {\em cylindrical function} in a strip $S$ if $u$ is given by (\ref{cilindro}) in $S$. Moreover, we say that $u$ is a {\em conical function} in an open sector
$$
S_{\theta_1}^{\theta_2}(p)=\{p+\rho\, (\cos\theta,\sin\theta):\ \rho>0,\ \theta_1<\theta<\theta_2\},\qquad 0<\theta_2-\theta_1<2\pi,
$$
or in $\r^2\backslash\{p\}$, if $u$ is given by
\begin{equation}\label{cono}
u(p+\rho\, (\cos\theta,\sin\theta))=u_0+\rho \,\alpha(\theta)
\end{equation}
in one of the previous domains, for a certain ${\cal C}^2$ function $\alpha(\theta)$.}
\end{defi}

Now, let us prove some technical lemmas which will be necessary in our study.

\begin{lem}\label{l3}
Let $\Omega$ be a domain, $p_0\in\Omega$, $S\subseteq\Omega$ be an open sector with vertex at $p_0$, and $u:\Omega\backslash\{p_0\}\fl\r$ be a ${\cal C}^2$ solution of (\ref{the-equation}) with isolated singularity at $p_0$. If for every $p\in S\cap\n$ one has that $r(p)$ is a half-line with end point at $p_0$ then $u$ is a conical function in $S$.
\end{lem}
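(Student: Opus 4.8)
The plan is to place $p_0$ at the origin, introduce polar coordinates $(\rho,\theta)$ so that points of $S$ are written $p_0+\rho\,(\cos\theta,\sin\theta)$, and to prove directly that $u$ has the form (\ref{cono}). By Lemma \ref{l2} the solution $u$ extends continuously to $p_0$, so I may set $u_0:=u(p_0)$. The core of the argument is the pointwise identity that the \emph{second radial derivative of $u$ vanishes on all of $S$}: writing $\omega=\omega(\theta)=(\cos\theta,\sin\theta)$ for the radial unit vector and noting that each ray is a straight segment (so there is no curvature correction), this identity reads $\partial_\rho^2 u=\omega^{t}(\mathrm{Hess}\,u)\,\omega=0$ at every point of $S$.

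To establish this identity I would split $S$ into $S\cap\n$ and $S\cap\u$. On $S\cap\u$ the Hessian of $u$ vanishes identically, so $\omega^{t}(\mathrm{Hess}\,u)\,\omega=0$ trivially. On $S\cap\n$ I would invoke the hypothesis: given $p\in S\cap\n$, the piece $r(p)$ from Lemma \ref{HN} is a half-line with endpoint $p_0$, and since it contains $p$ it must be exactly the ray from $p_0$ through $p$, whose direction is the radial vector $\omega$. By Lemma \ref{HN} the gradient of $u$ is constant along $r(p)$; differentiating $\nabla u$ in the direction of $r(p)$ at $p$ then yields $(\mathrm{Hess}\,u)\,\omega=0$, and in particular $\omega^{t}(\mathrm{Hess}\,u)\,\omega=0$. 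Since every point of $S$ lies in one of the two sets, the identity $\partial_\rho^2 u\equiv 0$ holds throughout $S$.

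With $\partial_\rho^2 u\equiv 0$ in hand, for each fixed $\theta$ the profile $\rho\mapsto u(p_0+\rho\,\omega(\theta))$ is affine, say $u=c(\theta)+\rho\,\alpha(\theta)$. Letting $\rho\to 0^{+}$ and using the continuous extension of Lemma \ref{l2} forces $c(\theta)=u_0$ for every $\theta$, which is precisely (\ref{cono}). To see that $\alpha$ is of class ${\cal C}^2$ I would read it off from $u$ directly: fixing $\rho_0>0$ so that the circular arc $\{p_0+\rho_0\,\omega(\theta):\ \theta_1<\theta<\theta_2\}$ lies in $S$, one has $\alpha(\theta)=\big(u(p_0+\rho_0\,\omega(\theta))-u_0\big)/\rho_0$, which is ${\cal C}^2$ in $\theta$ because $u$ is ${\cal C}^2$ away from $p_0$ and $\theta\mapsto p_0+\rho_0\,\omega(\theta)$ is smooth.

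The step I expect to be the main obstacle is conceptual rather than computational: the passage from \emph{``the gradient is constant along the radial ruling $r(p)$''} to \emph{``the radial direction lies in the kernel of $\mathrm{Hess}\,u$ at $p$''}. It is exactly this observation that collapses the two cases $\n$ and $\u$ into the single clean statement $\partial_\rho^2 u\equiv 0$; once that is secured, the remainder is integration along rays together with the continuity at the vertex supplied by Lemma \ref{l2}.
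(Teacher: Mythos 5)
Your proof is correct and follows essentially the same route as the paper: both arguments reduce the lemma to showing that $u$ is affine along each radial ray of $S$ and then use the continuous extension to $p_0$ from Lemma \ref{l2} to force the $\rho$-independent term to equal the single value $u_0$. Your pointwise identity $\partial_\rho^2 u=\omega^{t}(\mathrm{Hess}\,u)\,\omega=0$ --- obtained by differentiating the constant gradient along the radial ruling $r(p)$ on $S\cap\n$ and by the vanishing of the Hessian on $S\cap\u$ --- is a slightly cleaner packaging of the step that the paper carries out by a case distinction between rays contained in $\n$, in $int(\u)$, and in $\partial\n$.
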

\begin{proof}
If $p\in S\cap\n$ then the graph of $r(p)$ is the half-line $R(p)$, and if $r_0\subseteq int(\u)\cap S$ is a half-line with end point at $p_0$ then the image of $r_0$ is a half-line in $\r^3$, because each connected component of the graph of $int(\u)$ is a piece of a plane. As a consequence, the image of a half-line  $r_0$ with end point at $p_0$ is also a half-line of $\r^3$ if $r_0\subseteq \partial\n=\partial (int(\u))$.

Consequently, $u(p_0+\rho\, \theta)=u_0(\theta)+\rho \,\alpha(\theta)$, with $\rho>0$. And, from Lemma \ref{l2}, $u_0(\theta)$ must be constant since $u$ is continuous at $p_0$.
\end{proof}
\begin{lem}\label{l4}
Let $u:\r^2\backslash\{p_1,\ldots,p_n\}\fl\r$ be a ${\cal C}^2$ solution of (\ref{the-equation}) with isolated singularities at the points $p_i$. Let $S_1,S_2\subseteq\r^2\backslash\{p_1,\ldots,p_n\}$ be two disjoint open sets such that
\begin{enumerate}
\item[1)] $u$ is a cylindrical function in the strip $S_1$ and $u$ is a conical function in the sector $S_2$;  or
\item[2)] $u$ is a conical function in the sector $S_1$ and also in the sector $S_2$, both with different vertices.
\end{enumerate}

If $\partial S_1\cap\partial S_2$ contains a segment, then every non singular point in $\partial S_1\cap\partial S_2$ is contained in $\u$.
\end{lem}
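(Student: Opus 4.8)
The plan is to show that the Hessian of $u$ vanishes at every non-singular point $p$ lying in the interior of a common boundary segment $\sigma\subseteq\partial S_1\cap\partial S_2$; since $\u$ is closed in the domain, this suffices to conclude that all non-singular points of $\partial S_1\cap\partial S_2$ belong to $\u$. The key structural remark is that, because $p$ is not a singularity, $u$ is $\mathcal{C}^2$ near $p$, so its Hessian is continuous there and can be computed in two ways: as the limit of the Hessian along points of $S_1$ approaching $p$, and as the limit along points of $S_2$ approaching $p$ (both accumulations exist because $p\in\partial S_1\cap\partial S_2$). Both limits must agree with $\mathrm{Hess}\,u(p)$, so the whole argument reduces to comparing the two explicit expressions coming from the cylindrical and conical normal forms.

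First I would record the two normal forms. On a cylindrical piece $u(xe_1+ye_2)=\alpha(x)+v_0\,y$ the Hessian is $\alpha''(x)\,e_1\otimes e_1$: it has rank at most one, its kernel is the ruling direction $e_2$, and its single possibly-nonzero eigenvalue $\alpha''(x)$ is constant along each ruling. On a conical piece $u=u_0+\rho\,\alpha(\theta)$ in polar coordinates at the vertex $q$, the function is homogeneous of degree one off the vertex, so Euler's identity gives $\mathrm{Hess}\,u\cdot(p-q)=0$; thus the kernel is the radial (ruling) direction, and a trace computation yields the nonzero eigenvalue $\big(\alpha(\theta)+\alpha''(\theta)\big)/\rho$, which decays like $1/\rho$ along a fixed ruling. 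In both cases the null direction is the ruling through $p$.

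Next I would pin down the geometry of $\sigma$. Since $\sigma\subseteq\partial S_1$ and $\sigma\subseteq\partial S_2$, the segment $\sigma$ lies along a ruling of each piece: in the cylindrical case it is parallel to $e_2$, and in the conical case it lies on a half-line issuing from the vertex. Consequently the null directions computed from the two sides both point along $\sigma$, so they agree automatically, and the two rank-one Hessians can coincide only if their nonzero eigenvalues coincide. In case 2) there is an extra consequence: a single segment $\sigma$ lying on a half-line from $q_1$ and also on a half-line from $q_2$ forces the line carrying $\sigma$ to pass through both vertices, so $q_1,q_2$ and $\sigma$ are collinear with $q_1\neq q_2$.

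Finally comes the crux. Equating the nonzero eigenvalues along $\sigma$ gives, in case 1), the identity $\alpha''(x_0)=\big(\alpha(\theta_0)+\alpha''(\theta_0)\big)/\rho_p$ with $x_0,\theta_0$ fixed but $\rho_p=|p-q|$ ranging over an interval of positive values as $p$ moves along $\sigma$; the left-hand side is constant while the right-hand side genuinely varies unless it is identically zero, forcing $\mathrm{Hess}\,u(p)=0$. In case 2) the identity becomes $\kappa_1/\rho_1=\kappa_2/\rho_2$ with $\rho_i=|p-q_i|$ and $\kappa_i$ constant along $\sigma$; since $q_1\neq q_2$ are collinear with $\sigma$, the ratio $\rho_1/\rho_2$ is a non-constant function of the position of $p$ on $\sigma$, so again both $\kappa_i$ must vanish and the Hessian is zero. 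I expect the main obstacle to be the careful bookkeeping in this last step: verifying that the nonzero eigendirections (not merely the kernels) are forced to coincide before eigenvalues may be compared, and confirming that $\rho_p$, respectively $\rho_1/\rho_2$, is non-constant along the nondegenerate segment $\sigma$, so that only the trivial solution survives.
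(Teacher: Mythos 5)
Your proposal is correct and follows essentially the same route as the paper: the paper likewise evaluates $Hess(u)(e,e)$ (with $e$ normal to the common boundary ruling) from each side, getting a constant along the segment in the cylindrical case and $\bigl(\alpha''(\theta_0)+\alpha(\theta_0)\bigr)/\rho$ in the conical case, and concludes that the incompatible dependence on position along the segment forces both to vanish, hence the Hessian vanishes there. Your additional bookkeeping (Euler's identity, identification of the kernel with the ruling direction, the collinearity of the two vertices in case 2) just makes explicit what the paper leaves implicit.
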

\begin{proof}
If $S$ is a strip and $e$ is a normal unit vector to a connected component of $\partial S$ then, from (\ref{cilindro}), one has that $Hess(u)(e,e)$ is constant for all point in this component.

On the other hand, if $S$ is a sector with vertex at $p_0$ and the half-line $p_0+\rho\,\theta_0$, with $\rho>0$, is contained in $\partial S$ then, from (\ref{cono}), one has that $Hess(u)(e,e)=\frac{\alpha''(\theta_0)+\alpha(\theta_0)}{\rho}$, where $e$ is a normal unit vector to the half-line.

In the case 1) or 2), it is clear that $\alpha''(\theta_0)+\alpha(\theta_0)$ vanishes at every point in $\partial S_1\cap\partial S_2$. This is equivalent to show that the Hessian matrix of $u$ vanishes at every non singular point in $\partial S_1\cap\partial S_2$.
\end{proof}

\begin{pro}\label{l5}
Let $u:\r^2\backslash\{p_1,\ldots,p_n\}\fl\r$ be a ${\cal C}^2$ solution of (\ref{the-equation}) with isolated singularities at the points $p_i$, and $p\in\n$. Then $r(p)$ cannot be a segment.

Moreover, for any singular point $p_i$,
\begin{equation}\label{npi}
\n_{p_i}=\{p\in\n:\ r(p) \text{ is a half-line with end point }p_i\}
\end{equation}
is an open set.
\end{pro}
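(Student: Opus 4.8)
The plan is to derive both assertions from a single tool: the continuity of the \emph{ruling field} on $\n$, combined with a limiting argument on the (constant) gradients carried by the rulings. First I set up that field. On $\n$ the Hessian of $u$ is continuous and, by (\ref{the-equation}), has vanishing determinant while not vanishing, so it is a symmetric $2\times 2$ matrix of rank exactly one; its kernel is a line field $d(\cdot)$ depending continuously on the point, and by Lemma \ref{HN} the pieces $r(q)$ are precisely the straight integral segments of $d$ (straight because $\nabla u$ is constant along them). Hence the direction of $r(q)$ tends to that of $r(p)$ as $q\to p$ in $\n$, and two distinct rulings can never meet inside the domain, since a common point would carry a single gradient, forcing the two constant values of $\nabla u$ to coincide and, by the last clause of Lemma \ref{HN}, the rulings to coincide.

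The key technical step is a \emph{pass-by} statement. Suppose $q_k\to p$ in $\n$ with the rulings $r(q_k)$ converging to the line through $p$ in the direction of $r(p)$, but with each $r(q_k)$ avoiding a prescribed endpoint $p_i$ of $r(p)$ and extending to the far side of $p_i$. Since $\nabla u$ equals a constant $v_k$ along $r(q_k)$ and $v_k\to\nabla u(p)$ by continuity, passing to the limit on points of $r(q_k)$ lying beyond $p_i$ shows that $\nabla u$ extends continuously through $p_i$, and that a whole punctured neighborhood of $p_i$ is foliated by rulings missing $p_i$. Matching this foliation with the global alternatives---parallel rulings give a cylinder (\ref{cilindro}) by Proposition \ref{max-strip}, and focusing rulings give a cone about a vertex \emph{different from} $p_i$---one identifies $u$ near $p_i$ with a developable graph that is ${\cal C}^2$ across $p_i$, contradicting that $p_i$ is a genuine isolated singularity. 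In words: a genuine singularity forces the nearby rulings to \emph{focus} exactly at it.

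The first assertion now follows. If $r(p)$ were a segment $[p_i,p_j]$, the nearby rulings would sweep a developable strip along $r(p)$; but a single straight ruling cannot be the focusing line for both of its distinct endpoints, so the rulings fail to focus at $p_i$ or at $p_j$. At that endpoint the pass-by step applies and yields a ${\cal C}^2$ extension there, contradicting that both $p_i$ and $p_j$ are singular. Hence $r(p)$ cannot be a segment. For the openness of $\n_{p_i}$ in (\ref{npi}), take $p\in\n_{p_i}$, so $r(p)$ is a half-line ending at $p_i$, and let $q\in\n$ be close to $p$. By the part just proved $r(q)$ is a line or a half-line, and its direction is close to that of $r(p)$, so the line carrying $r(q)$ passes near $p_i$; tracing $r(q)$ toward $p_i$, either it terminates at $p_i$, whence $q\in\n_{p_i}$, or it passes by $p_i$, which the pass-by step excludes. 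As $\n$ is open, a full neighborhood of $p$ lies in $\n_{p_i}$.

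The main obstacle is exactly the pass-by step, namely upgrading the \emph{continuous} extension of $\nabla u$ across a pass-by point to a genuine ${\cal C}^2$ extension of $u$ there. This is where one must use the local developable normal form (the cylinder of Proposition \ref{max-strip} when the rulings are asymptotically parallel, or a cone about a distinct vertex when they focus elsewhere), together with the isolation and finiteness of the singular set, and where one must rule out a tangent-developable degeneration that would place the focusing locus precisely at $p_i$; I expect the control of the second fundamental form near such a pass-by point to be the delicate part of the argument.
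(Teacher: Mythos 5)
Your overall strategy (study the limiting behaviour of the rulings near $r(p)$ and contradict the singularity of an endpoint) is the same as the paper's, but the proposal has two genuine gaps. First, the ``pass-by'' step, which you yourself flag as the main obstacle, is not proved, and as stated it asks for more than the convergence of rulings can deliver: the rulings $r(q_k)$ with $q_k\to p$ from one side of $r(p)$ only foliate a \emph{half}-neighbourhood (a sector or half-disc) at the endpoint, so you never obtain ``a whole punctured neighborhood of $p_i$ foliated by rulings missing $p_i$,'' and the desired ${\cal C}^2$ extension of $u$ across $p_i$ does not follow from the continuous extension of the gradient. The paper avoids this difficulty entirely: it never extends $u$ smoothly across the endpoint, but instead invokes the explicit normal forms (\ref{cilindro}) and (\ref{cono}) on each side (via Proposition \ref{max-strip} and Lemma \ref{l3}), from which one reads off that in a cylindrical or conical piece a boundary point is singular if and only if every nearby point of the bounding line or half-line is singular --- contradicting that the singularities are finitely many isolated points.

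Second, and more decisively, your argument for the segment case does not cover the configuration in which the rulings above $r(p)=[p_i,p_j]$ focus at $p_i$ while the rulings below focus at $p_j$ (or one side consists of parallel lines while the other focuses at an endpoint). In that configuration neither endpoint is ``passed by'' from all sides, so even a correct pass-by lemma yields nothing; your claim that ``the rulings fail to focus at $p_i$ or at $p_j$'' and that the pass-by step then applies simply does not address this case. The paper excludes it with Lemma \ref{l4}: if the two sides are cylindrical/conical pieces with different vertices, the normal second derivative computed from (\ref{cilindro}) and (\ref{cono}) forces every non-singular point of the common boundary ray --- which contains $p$ --- to lie in $\u$, contradicting $p\in\n$. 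This Hessian-matching argument is the key ingredient missing from your proposal; without it (or a substitute) both the non-existence of segment rulings and the openness of $\n_{p_i}$ remain unproved.
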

\begin{proof}
Let $p\in\n$ such that $r(p)$ is either a segment or a half-line. Up to a translation and a rotation, we can assume $p$ is the origin and a singular end point of $r(p)$ is $(x_1,0)$ with $x_1<0$.

Since $\n$ is an open set, we consider $\varepsilon>0$ such that the open disk $D(\varepsilon)$ centered at the origin with radius $\varepsilon$ is contained in $\n$. Moreover, using that the set of singular points is finite, $\varepsilon$ can be chosen small enough in such a way there is no singular point $p_k=(x_k,y_k)$ with $0<|y_k|<\varepsilon$.

Let $q_n=(x_n,y_n)$ be a sequence in $D(\varepsilon)^+=\{(x,y)\in D(\varepsilon):y>0\}$ going to $p=(0,0)$. Observe that the number of segments of the form $r(q)$ contained in $\n$ can only be finite because the number of singularities is finite. Thus, replacing $q_n$ by a point close to it, we can assume $r(q_n)$ is a half-line or a line, for every point $q_n$ of the sequence. Since $r(q_n)$ cannot intersect $r(p)$, the angle between $r(q_n)$ and $r(p)$ tends to zero.

If there exists a subsequence $q_m$ such that $r(q_m)$ are lines, then all of them must be parallel and, from Proposition \ref{max-strip}, $r(q)$ is a horizontal line for all $q=(x,y)\in D(\varepsilon)^+$.

Otherwise, since the number of singularities is finite, there exists a subsequence $q_m$ such that $r(q_m)$ are half-lines with the same singular end point $p_{i_1}$. Thus, as the angle between $r(q_m)$ and $r(p)$ tends to zero, $p_{i_1}$ is necessarily placed on the line $y=0$. In particular, taking $\varepsilon$ small enough, we can assume that the sector made of all half-lines containing a point $q\in D(\varepsilon)^+$ and with end point at $p_{i_1}$ does not contain another singular point. Hence, each half-line of this sector with end point $p_{i_1}$ is of the form $r(q)$ for $q\in D(\varepsilon)^+$.

Analogously, we can follow the same reasoning for $D(\varepsilon)^-=\{(x,y)\in D(\varepsilon):y<0\}$. Thus, $r(q)$ is a  horizontal line for all  $q\in D(\varepsilon)^-$, or $r(q)$ is a half-line for all $q\in D(\varepsilon)^-$ with the same singular end point $p_{i_2}$, placed at the line $y=0$.

Now, let us denote by $S^+$ the strip $\{(x,y)\in\r^2:0<y<\varepsilon\}$ if $r(q)$ is a horizontal line for all $q\in D(\varepsilon)^+$, or the sector containing $D(\varepsilon)^+$ with vertex at $p_{i_1}$ if $r(q)$ is a half-line for all $q\in D(\varepsilon)^+$. We also denote $S^-$ the corresponding set for the points in $D(\varepsilon)^-$.

From Proposition \ref{max-strip}, $S^+$ and $S^-$ cannot be two strips because the point $(x_1,0)$ is singular. Moreover, from Lemma \ref{l4}, $S^+$ and $S^-$ are two sectors with the same vertex $p_{i_1}=p_{i_2}$ because otherwise $p=(0,0)$ would be contained in $\u$. Thus, from Lemma \ref{l3}, if $r$ is the half-line containing $p=(0,0)$ with end point $p_{i_1}=p_{i_2}$ then a point in $r$ is a singular point if and only if every point in $r$ is singular. Hence, $p_{i_1}=p_{i_2}=(x_1,0)$ and $r(p)$ cannot be a segment.

This proves that if $p\in\n$ such that $r(p)$ is a half-line then there exists $\varepsilon>0$ such that $D_p(\varepsilon)\subseteq\n$ and for all $q\in D_p(\varepsilon)$ the set $r(q)$ is a half-line with the same end point. That is, $\n_{p_i}$ is an open set.
\end{proof}

As a consequence of Propositions \ref{max-strip} and \ref{l5},  we get the following result.

\begin{cor}
\label{c0}
Let $u:\r^2\backslash\{p_1,\ldots,p_n\}\fl\r$ be a ${\cal C}^2$ solution of (\ref{the-equation}) with isolated singularities at the points $p_i$. If $V$ is a connected component of
$\n$ then either  $V$ is made of parallel lines or $V$ is made of half-lines with  common endpoint at some $p_i.$
\end{cor}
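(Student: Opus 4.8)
The plan is to exhibit $\n$ as a finite disjoint union of \emph{open} sets, one consisting of the points whose associated set $r(p)$ is a full line and the others indexed by the singularities, and then to invoke the connectedness of $V$. First I would recall the trichotomy for $r(p)$ stated just before Proposition \ref{max-strip} and use Proposition \ref{l5} to discard the segment case, so that for every $p\in\n$ the set $r(p)$ is either a line or a half-line with endpoint at exactly one singular point. This yields the partition $\n=L\cup\bigcup_{i}\n_{p_i}$, where $L=\{p\in\n:\ r(p)\text{ is a line}\}$ and $\n_{p_i}$ is as in (\ref{npi}); the pieces are pairwise disjoint because a line is neither a half-line nor shares an endpoint with the half-lines attached to a different $p_i$.

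The key step is to check that each piece is open. For $\n_{p_i}$ this is exactly the openness asserted in Proposition \ref{l5}. For $L$, given $p\in L$ I would pass to the maximal open strip $S(p)$ provided by Proposition \ref{max-strip}; its proof shows that every $q\in\n\cap S(p)$ has $r(q)$ equal to $r(p)$ or a line parallel to it, so $\n\cap S(p)\subseteq L$. Since $S(p)$ is an open neighbourhood of $p$, this proves $L$ open.

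With a finite partition of $\n$ into disjoint open sets, a connected component $V$ must be entirely contained in a single piece: the traces $V\cap L$ and $V\cap\n_{p_i}$ are disjoint relatively open subsets covering $V$, so connectedness forces all but one to be empty. It then remains to read off the geometry. In either case, for $p\in V$ the whole of $r(p)$ lies in $\n$ (its points have non-vanishing Hessian by Lemma \ref{HN}) and is connected through $p$, hence $r(p)\subseteq V$; thus $V$ is a union of the sets $r(p)$. If $V\subseteq\n_{p_i}$ these are half-lines all ending at $p_i$, giving the second alternative. If $V\subseteq L$, part~1) of Proposition \ref{max-strip} gives $r(q)\subseteq S(p)$ for every $q\in V$, and a line contained in the strip $S(p)$ is necessarily parallel to $r(p)$; hence all these lines are mutually parallel, giving the first alternative.

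The only delicate point is the openness of $L$, which is not stated verbatim earlier but is extracted from the internal argument of Proposition \ref{max-strip}; everything else is bookkeeping with connectedness and with the already-established structure of $r(p)$.
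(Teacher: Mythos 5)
Your proof is correct and follows the route the paper intends: the corollary is stated there without a written proof, as an immediate consequence of Propositions \ref{max-strip} and \ref{l5}, and your argument is exactly the fleshed-out version of that (segments excluded and $\n_{p_i}$ open by Proposition \ref{l5}, openness of the set of ``line points'' extracted from the strip argument in Proposition \ref{max-strip}, then connectedness). The one point you flag as delicate --- that the set $L$ of points whose $r(p)$ is a full line is open, which is needed since $L$ being merely the complement of the open sets $\n_{p_i}$ in $\n$ would only make it closed there --- is handled correctly via the first paragraph of the proof of Proposition \ref{max-strip}.
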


\begin{cor}\label{c1}
Let $u:\r^2\backslash\{p_1,\ldots,p_n\}\fl\r$ be a ${\cal C}^2$ solution of (\ref{the-equation}) with isolated singularities at the points $p_i$. Then the set $\n_{p_i}$, given by (\ref{npi}), is a non empty union of open sectors with vertex at $p_i$, for every singularity $p_i$. In particular, $u$ is a conical function in each connected component of $\n_{p_i}$.
%
\end{cor}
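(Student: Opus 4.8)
The plan is to prove Corollary \ref{c1}, which asserts that for each singularity $p_i$, the set $\n_{p_i}$ is a non-empty union of open sectors with vertex at $p_i$, and that $u$ is a conical function on each connected component.

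First I would establish that $\n_{p_i}$ is non-empty. Every singularity $p_i$ must have points of $\n$ accumulating at it: if $u$ were umbilical on a whole punctured neighborhood of $p_i$, then by Lemma \ref{HN} the image of that connected component of $int(\u)$ would lie in a single plane, and $u$ would extend ${\cal C}^2$ across $p_i$ by linearity, contradicting that $p_i$ is an isolated singularity. So there are points $p\in\n$ arbitrarily close to $p_i$. For such $p$ near $p_i$, the line $r(p)$ cannot be a full line (it would have to pass near but avoid $p_i$, and a maximal strip $S(p)$ as in Proposition \ref{max-strip} cannot have $p_i$ in its interior); by Proposition \ref{l5}, $r(p)$ is never a segment; hence $r(p)$ is a half-line. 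One checks its endpoint must be $p_i$ itself, since a half-line through a point very close to $p_i$ whose endpoint is some other singularity would either have to avoid the disk around $p_i$ or place that endpoint on the relevant ray — the geometry forces the endpoint to be $p_i$ for points sufficiently close. This shows $\n_{p_i}\neq\emptyset$.

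Next I would use Proposition \ref{l5} directly: it already proves that $\n_{p_i}$ is open. Since $\n_{p_i}$ consists of points $p$ for which $r(p)$ is a half-line emanating from $p_i$, and the half-line $r(p)$ is determined by its angle $\theta$ at $p_i$, the set $\n_{p_i}$ is \emph{star-shaped} with respect to $p_i$ along rays: if $p\in\n_{p_i}$ then the entire half-line from $p_i$ through $p$ (minus $p_i$) lies in $\n_{p_i}$, because every point of $r(p)$ has the same gradient and non-vanishing Hessian, so belongs to $\n$ with its associated line equal to $r(p)$. An open set that is a union of open rays from $p_i$ is precisely an open union of sectors with vertex $p_i$: passing to polar coordinates centered at $p_i$, the set corresponds to an open subset of the angular circle times $(0,\infty)$, and each connected component is an open sector $S_{\theta_1}^{\theta_2}(p_i)$ (or all of $\r^2\setminus\{p_i\}$).

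Finally, the conical structure on each component follows immediately from Lemma \ref{l3}: on any open sector $S$ with vertex $p_i$ on which every $p\in S\cap\n$ has $r(p)$ a half-line ending at $p_i$, Lemma \ref{l3} gives that $u$ is a conical function in $S$. Each connected component of $\n_{p_i}$ is such a sector, so $u$ is conical there. I expect the main obstacle to be the non-emptiness step — specifically, ruling out the possibility that all nearby rays $r(p)$ point at some singularity \emph{other} than $p_i$, and more delicately excluding the case where $r(p)$ is a full line for all nearby $p$ (which by Proposition \ref{max-strip} would force a cylindrical neighborhood and hence a removable, not genuine, singularity at $p_i$). The rest is a routine translation of the ``union of rays'' property into the language of sectors.
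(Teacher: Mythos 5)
Your reduction of the corollary to the non-emptiness of $\n_{p_i}$ is exactly the paper's strategy: openness comes from Proposition \ref{l5}, the star-shaped/union-of-rays observation turns an open set of half-lines from $p_i$ into a union of open sectors, and Lemma \ref{l3} gives the conical structure on each component. That part is fine. The problem is that the non-emptiness step --- which you yourself identify as the main obstacle --- is not actually proved; the sentence ``the geometry forces the endpoint to be $p_i$ for points sufficiently close'' is false as a purely geometric claim. A half-line emanating from a \emph{different} singularity $p_j$ can pass arbitrarily close to $p_i$ without ending there (think of the rays from $p_j$ fanning past $p_i$), and a full line can likewise pass arbitrarily close to $p_i$, with $p_i$ sitting on the boundary (not the interior) of the maximal strip $S(p)$, which Proposition \ref{max-strip} permits. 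So nothing so far excludes the scenario where \emph{every} point of $\n$ near $p_i$ has $r(p)$ either a line or a half-line ending at some $p_j\neq p_i$. Your parenthetical for the line case (``would force a cylindrical neighborhood and hence a removable singularity'') is also too quick: a cylindrical structure on one side of the limit line through $p_i$ says nothing by itself about the other side.

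The paper closes this gap with a genuinely two-sided argument that is absent from your proposal. Assuming $\n_{p_i}=\emptyset$, one takes a sequence $q_m\to p_i$ in $\n$, extracts a subsequence whose $r(q_m)$ are either all parallel lines or all half-lines with a common endpoint $p_j\neq p_i$; the limiting direction determines a line through $p_i$ (normalized to $y=0$), and the $q_m$ eventually lie in one open half-plane. One then shows a second such sequence must exist in the \emph{opposite} half-plane (else a half-disk would lie in $\u$, making its image planar and forcing, via the cylindrical/conical formula on the first side, that $p_i$ is singular iff a whole neighborhood of $p_i$ on $y=0$ is singular). This produces a strip or sector $S^+$ above and $S^-$ below, on each of which $u$ is cylindrical (Proposition \ref{max-strip}) or conical with vertex $\neq p_i$ (Lemma \ref{l3}); in every combination, the explicit formulas (\ref{cilindro}) and (\ref{cono}) show that $p_i$ can be singular only if every nearby point of the line $y=0$ is singular, contradicting that the singularities are finitely many isolated points. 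Without some version of this propagation-of-singularity argument along the limit line, your proof of $\n_{p_i}\neq\emptyset$ does not go through.
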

\begin{proof}
From Proposition \ref{l5} and Lemma \ref{l3}, we only need to show that $\n_{p_i}$ is non empty for each singular point $p_i$. So, fix a singular point $p_i$ and assume $\n_{p_i}$ is empty.

If there existed a punctured  neighborhood $U$ of $p_i$ contained in $\u$ then the image of $U$ would lie in a plane and $p_i$ would not be a singular point. Thus, there must exist a sequence $q_n\in\n$ going to $p_i$. Moreover, a subsequence $q_m$ must satisfy that $r(q_m)$ is a line for all $m$ (and so the lines are parallel) or $r(q_m)$ is a half-line for all $m$ with a common singular end point $p_{j}\neq p_{i}$. Up to a translation and a rotation, we can assume $p_i$ is the origin and the limit line of the lines containing $r(q_m)$ is $y=0$. In particular, in the latter case $p_j$ is placed in the line $y=0$.

Observe that $q_m$ cannot be placed in the segment joining the singular points $p_i$ and $p_j$, that is, in any case, every point $q_m$ is in the open upper half-space $\r^2_+=\{(x,y)\in\r^2:y>0\}$ or in the open lower half-space $\r^2_-=\{(x,y)\in\r^2:y<0\}$.

Passing to a subsequence, if necessary, we can assume $q_m$ is contained in $\r^2_+$ for all $m$, or  $q_m$ is contained in $\r^2_-$ for all $m$. Suppose $q_m\in\r^2_+$ for every $m$. If $r(q_m)$ is a line for each $m$ then we denote by $S^+$ the maximal strip determined by Proposition \ref{max-strip}, which satisfies that the line $y=0$ is in its closure. If $r(q_m)$ is a half-line for all $m$ then for $m$ big enough the open sector determined by $r(q_m)$ and $y=0$ does not contain any singular point, and we call $S^+$ to this sector. In the first case the function $u$ would be cylindrical in $S^+$ (Proposition \ref{max-strip}), and in the second case $u$ would be conical in $S^+$ (Lemma \ref{l3}) with vertex at $p_j\neq p_i$.

On the other hand, there must exist a sequence of points in $\n\cap\r^2_-$ going to $p_i$. Otherwise there would exist $\varepsilon>0$ such that the open half-disk $D(\varepsilon)\cap\r^2_-$ is contained in $\u$. Thus, the image of $D(\varepsilon)\cap\r^2_-$ lies in a plane. But, since $u$ is cylindrical or conical in $S^+$ then the origin $p_i$ is a singular point if and only if every point of a neighborhood of $p_i$ in the line $y=0$ is singular, which gives us a contradiction.
Thus, the previous sequence exists and we can define $S^-$ in an analogous way.

So, $u$ will be cylindrical or conical in $S^+$ and in $S^-$. But, in any case, this is again a contradiction because $p_i$ would be a singular point if and only if every point of a neighborhood of $p_i$ in the line $y=0$ is singular.  Hence $\n_{p_i}\not=\emptyset.$
\end{proof}

As a consequence we obtain a first global classification result.
\begin{teo}
Let $u:\r^2\backslash\{p_1,\ldots,p_n\}\fl\r$ be an analytic solution of (\ref{the-equation}) with isolated singularities at the points $p_i$.
Then, either there is no singular point and $u$ is a cylindrical function in $\r^2$, or there is a unique singular point and $u$ is a conical function in $\r^2\backslash\{p_1\}$.
\end{teo}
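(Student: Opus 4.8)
The plan is to split into the trivial case of no singularity and the substantial case of at least one singularity, where analyticity is the decisive ingredient. If $n=0$, then $u$ is an analytic solution on all of $\r^2$, and the classical theorem of Pogorelov and Hartman--Nirenberg \cite{HN,P} gives at once that $u$ is cylindrical in $\r^2$. So I would assume from now on $n\geq 1$ and fix a singularity $p_1$. By Corollary \ref{c1} the set $\n_{p_1}$ is non-empty, so there is an open sector $S$ with vertex $p_1$ on which $u$ is a conical function, $u=u(p_1)+\rho\,\alpha(\theta)$, where $u(p_1)$ denotes the value of the continuous extension of $u$ at $p_1$ furnished by Lemma \ref{l2}.

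The heart of the argument is to propagate, using real-analyticity, the Euler (homogeneity) relation that holds on $S$ to the whole punctured plane. I would introduce the function
\[
E(q)=\langle q-p_1,\ grad_q(u)\rangle-\big(u(q)-u(p_1)\big),
\]
which is real-analytic on $\r^2\backslash\{p_1,\ldots,p_n\}$, being built from $u$ and its first derivatives (all analytic) together with the constant $u(p_1)$. On $S$ the conical expression gives $\langle q-p_1,\ grad_q(u)\rangle=\rho\,\partial_\rho u=\rho\,\alpha(\theta)=u(q)-u(p_1)$, so $E\equiv 0$ on $S$. Since $\r^2\backslash\{p_1,\ldots,p_n\}$ is connected and $E$ is real-analytic and vanishes on the non-empty open set $S$, the identity principle for real-analytic functions forces $E\equiv0$ on the entire punctured plane. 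This is precisely where analyticity is used, and it is exactly what fails in the merely $\mathcal{C}^2$ case.

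From $E\equiv0$ I would recover the global conical shape by integrating along the rays issuing from $p_1$. Fixing a unit vector $v$ and setting $\phi(t)=u(p_1+tv)-u(p_1)$ for $t>0$, on every open subinterval avoiding the finitely many singular points the relation $E\equiv0$ yields $\phi'(t)=\phi(t)/t$, hence $\phi(t)=c(v)\,t$ there; across any singular point $p_k=p_1+t_kv$ met by the ray the two constants agree, because $\phi$ is continuous at $t_k$ by Lemma \ref{l2} and $t_k>0$. Thus $u(p_1+tv)=u(p_1)+c(v)\,t$ on the whole ray, i.e. $u$ is conical on $\r^2\backslash\{p_1\}$ with $\alpha(\theta)=c(\cos\theta,\sin\theta)$. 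Choosing a radius $\rho_0$ whose circle avoids $p_2,\ldots,p_n$, the restriction of $u$ to that circle is analytic in $\theta$, so $\alpha$ is real-analytic on the whole unit circle; consequently $u=u(p_1)+\rho\,\alpha(\theta)$ is real-analytic at every point of $\r^2\backslash\{p_1\}$. Therefore no $p_j$ with $j\neq1$ can be a singularity, which forces $n=1$ and shows $u$ is a conical function in $\r^2\backslash\{p_1\}$.

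The hard part will be the analyticity-to-global-structure step: arranging that $E$ is genuinely real-analytic, which requires the well-defined constant $u(p_1)$ from Lemma \ref{l2}, and then invoking the real-analytic identity principle on the connected punctured plane. By contrast, the integration along rays is routine, the only delicate bookkeeping being the matching of the homogeneity constant across interior singularities lying on a ray, which is dispatched by the continuity of $u$ at those points.
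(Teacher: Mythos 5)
Your proof is correct and follows essentially the same strategy as the paper's: both start from the non-empty open conical sector $\n_{p_1}$ supplied by Corollary \ref{c1} and propagate a radial differential identity to the whole (connected) punctured plane via the identity principle for real-analytic functions. The only substantive difference is which identity is propagated --- the paper uses $u_{\rho\rho}\equiv 0$ in polar coordinates, while you use the first-order Euler relation $E\equiv 0$ --- which lets you spell out more explicitly than the paper the normalization $u_0(\theta)=u(p_1)$, the matching across singular points on a ray, and why no second singularity can survive.
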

\begin{proof}
The result is well known if there is no singular point \cite{HN,P}. Otherwise, up to a translation, we can assume that the origin is a singular point. So, for polar coordinates $(\rho,\theta)$, the function $u(\rho,\theta)$ is analytic and, from the previous Corollary, $u_{\rho\rho}$ vanishes identically in the non empty open set $\n_{(0,0)}$. Therefore, $u_{\rho\rho}$ must vanish globally, that is, $u$ is a conical function in $\r^2\backslash\{(0,0)\}$.
\end{proof}

\section{Entire solutions with one or two singularities.}\label{s4}

We devote this Section to the characterization of the entire solutions to the degenerate Monge-Ampère equation $u_{xx}u_{yy}-u_{xy}^2=0$, where $u$ is a ${\cal C}^2$ function in $\r^2$ minus one or two points. These results will be a consequence of the global results studied in the previous Section.

We first prove that there only exist two different kinds of solutions in the punctured plane.
\begin{teo}\label{t2}
Let $u:\r^2\backslash\{(0,0)\}\fl\r$ be a ${\cal C}^2$ solution of (\ref{the-equation}) with an isolated singularity at the origin. Then, $u$ is given by one of the following cases (Figure \ref{fig1}):
\begin{enumerate}
\item $u$ is a conical function in $\r^2\backslash\{(0,0)\}$, or
\item there exists a line $r$ containing the origin such that $u$ is a cylindrical function in one half-plane determined by $r$, and $u$ is a conical function in the other half-plane.
\end{enumerate}
\end{teo}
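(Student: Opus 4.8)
The plan is to run a dichotomy according to whether $\n$ contains a point $p$ for which $r(p)$ is an entire line. Since there is a single singularity, Proposition \ref{l5} already rules out segments, so for every $p\in\n$ the set $r(p)$ is \emph{either} a full line \emph{or} a half-line emanating from the origin. This reduces the whole problem to deciding which of the two possibilities occurs and to seeing how the associated regions of $\r^2\backslash\{(0,0)\}$ fit together. By Corollary \ref{c1} the set $\n_{(0,0)}$ is nonempty, so $\n$ is nonempty and the dichotomy is well posed.

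First I would dispose of the case in which \emph{no} $p\in\n$ gives a full line. Then $r(p)$ is a half-line with endpoint at the origin for every $p\in\n$, i.e. $\n=\n_{(0,0)}$. Running the argument of Lemma \ref{l3} over the whole punctured plane (its conclusion is stated also for $\r^2\backslash\{p\}$ in the definition of a conical function), the image of each half-line issuing from the origin is a half-line of $\r^3$, so $u(\rho,\theta)=u_0+\rho\,\alpha(\theta)$ and $u$ is conical on all of $\r^2\backslash\{(0,0)\}$; this is alternative (1).

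The substantive case is when there exists $p\in\n$ with $r(p)$ a full line. By Proposition \ref{max-strip}, $u$ is cylindrical on the maximal open strip $S(p)$, ruled parallel to $r(p)$. The key geometric step is to identify $S(p)$ precisely: since the origin is the only excluded point and $r(p)$ is a line missing it, one can widen the strip without obstruction on the side away from the origin, whereas on the near side the widening is blocked exactly at the line through the origin parallel to $r(p)$. Hence $S(p)$ is an open half-plane $H^{+}$ whose boundary is a line $r$ through the origin, and $u$ is cylindrical on $H^{+}$. I expect this identification of the maximal strip as a half-plane bounded by a line through the singularity to be the main point requiring care, since it is exactly where the single-singularity hypothesis and maximality of the strip are used; one must exclude both that $S(p)$ stays a genuine two-sided strip and that its boundary fails to meet the origin.

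It remains to analyze $u$ on the complementary open half-plane $H^{-}$. For any $q\in\n\cap H^{-}$, Proposition \ref{l5} again leaves only a full line or a half-line from the origin; but a full line $r(q)$ would be forced into $S(p)=H^{+}$ by part~1) of Proposition \ref{max-strip}, contradicting $q\in r(q)\subseteq H^{-}$. Therefore every such $r(q)$ is a half-line with endpoint the origin, so $\n\cap H^{-}\subseteq\n_{(0,0)}$. As $H^{-}$ is an open sector of angle $\pi$ with vertex at the origin, Lemma \ref{l3} yields that $u$ is conical on $H^{-}$. Combining the two half-planes, $u$ is cylindrical on one half-plane determined by $r$ and conical on the other, which is alternative (2), and the dichotomy is complete.
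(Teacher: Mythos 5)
Your proposal is correct and follows essentially the same route as the paper: the same dichotomy on whether some $r(p)$ is a full line, with Corollary \ref{c1} guaranteeing $\n\neq\emptyset$, Lemma \ref{l3} handling the conical case, and Proposition \ref{max-strip} (maximality forcing the strip to be a half-plane bounded by a line through the origin, and part 1) excluding full lines in the complementary half-plane) handling the cylindrical case. The extra detail you supply on why $S(p)$ must be a half-plane whose boundary passes through the singularity is exactly the content the paper leaves implicit, so there is nothing to add.
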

\begin{proof}
From Corollary \ref{c1}, $\n$ cannot be empty. Thus, if there is no point $p\in\n$ such that $r(p)$ is a line then, from Lemma \ref{l3}, $u$ is a conical solution in $\r^2\backslash\{(0,0)\}$.

On the other hand, if there is a point $p\in\n$ such that $r(p)$ is a line then, from Proposition \ref{max-strip}, there exists an open half-plane $S(p)$, containing $r(p)$, such that $u$ is a cylindrical function in $S(p)$. Moreover, the boundary of $S(p)$ in $\r^2$ is a line $r$ containing the singular point $(0,0)$. If $S^-$ is the other open half-plane determined by $r$, again Proposition \ref{max-strip} gives that $r(q)$ must be a half-line for every point $q\in\n\cap S^-$. Hence, $u$ is a conical function in $S^-$, by Lemma \ref{l3}.
\end{proof}
\begin{figure}[!h]
\centerline{\includegraphics[scale =0.6]{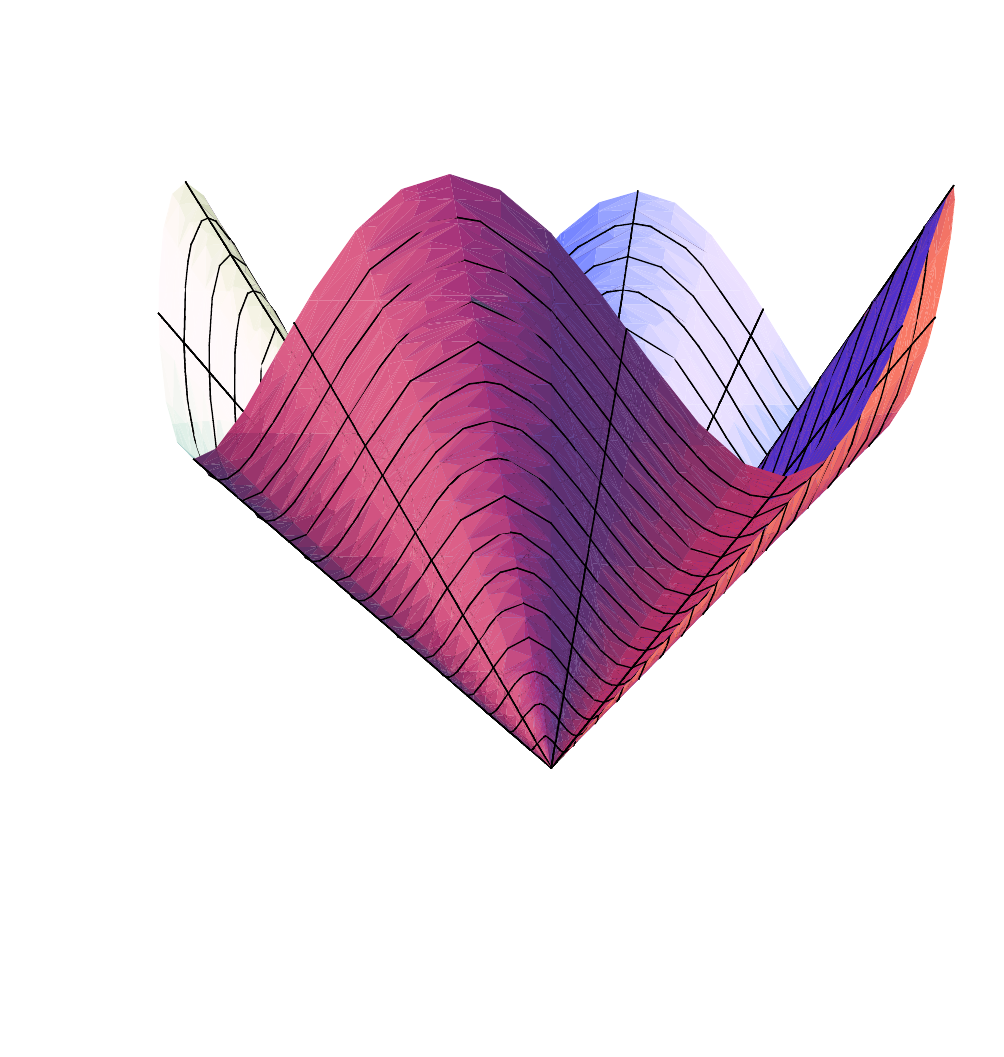} \hspace{2cm} \includegraphics[scale =0.6]{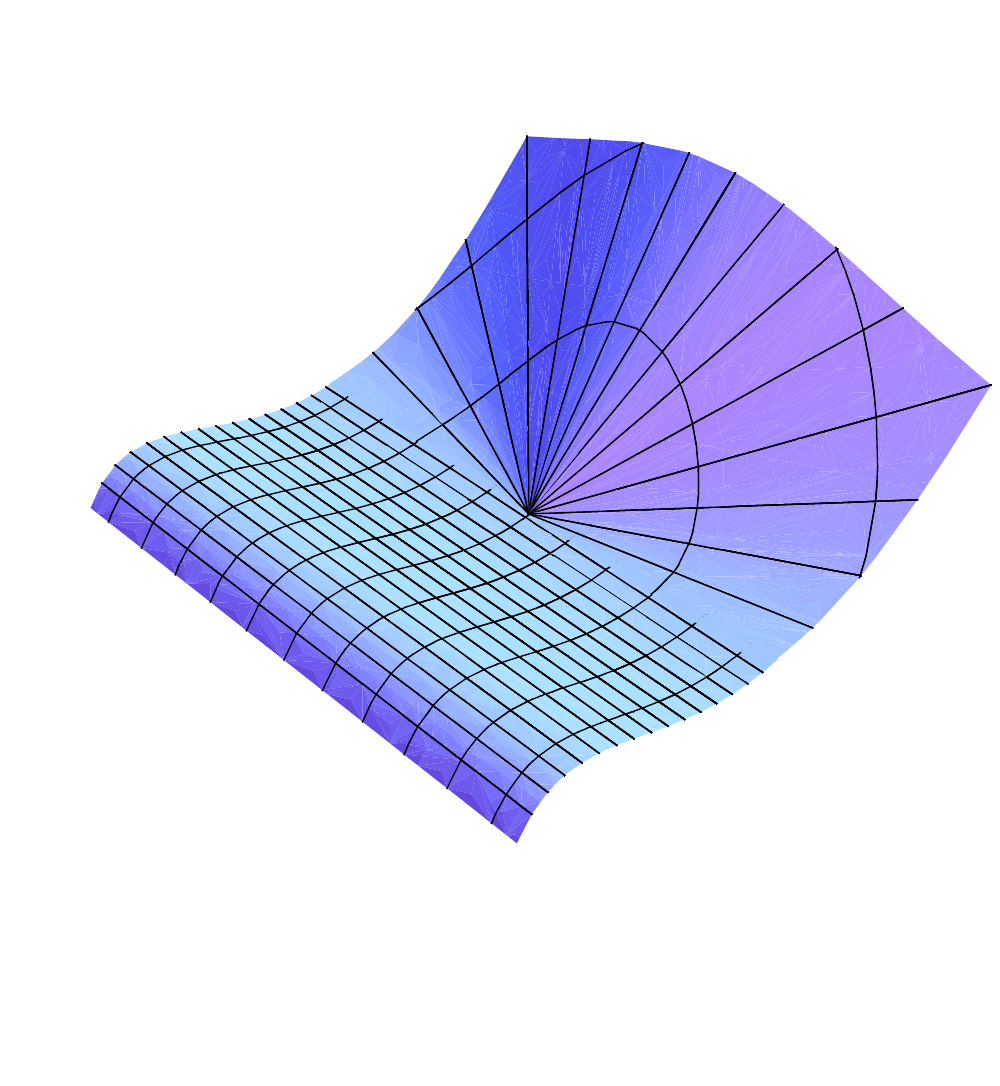}}\vspace{-1.5cm}
\caption[]{solutions with one singularity.}
\label{fig1}
\end{figure}

Now we present some families of examples of solutions to the equation (\ref{the-equation}) in the twice punctured plane.
\begin{eje}\label{ej1}{\em
Consider an open strip $S$ in $\r^2$ so that its boundary is given by two lines $r_1,r_2$. Choose two points $p_1\in r_1$, and $p_2\in r_2$. Then, a solution $u$ to the equation (\ref{the-equation}) can be obtained in $\r^2\backslash\{p_1,p_2\}$ such that $u$ is a cylindrical function in $S$, and $u$ is a conical function in each half-plane of $\r^2\backslash \overline{S}$ with vertex at $p_i$ (Figure \ref{segundo}).}
\end{eje}
\begin{figure}[!h]
\centerline{\includegraphics[scale =0.3]{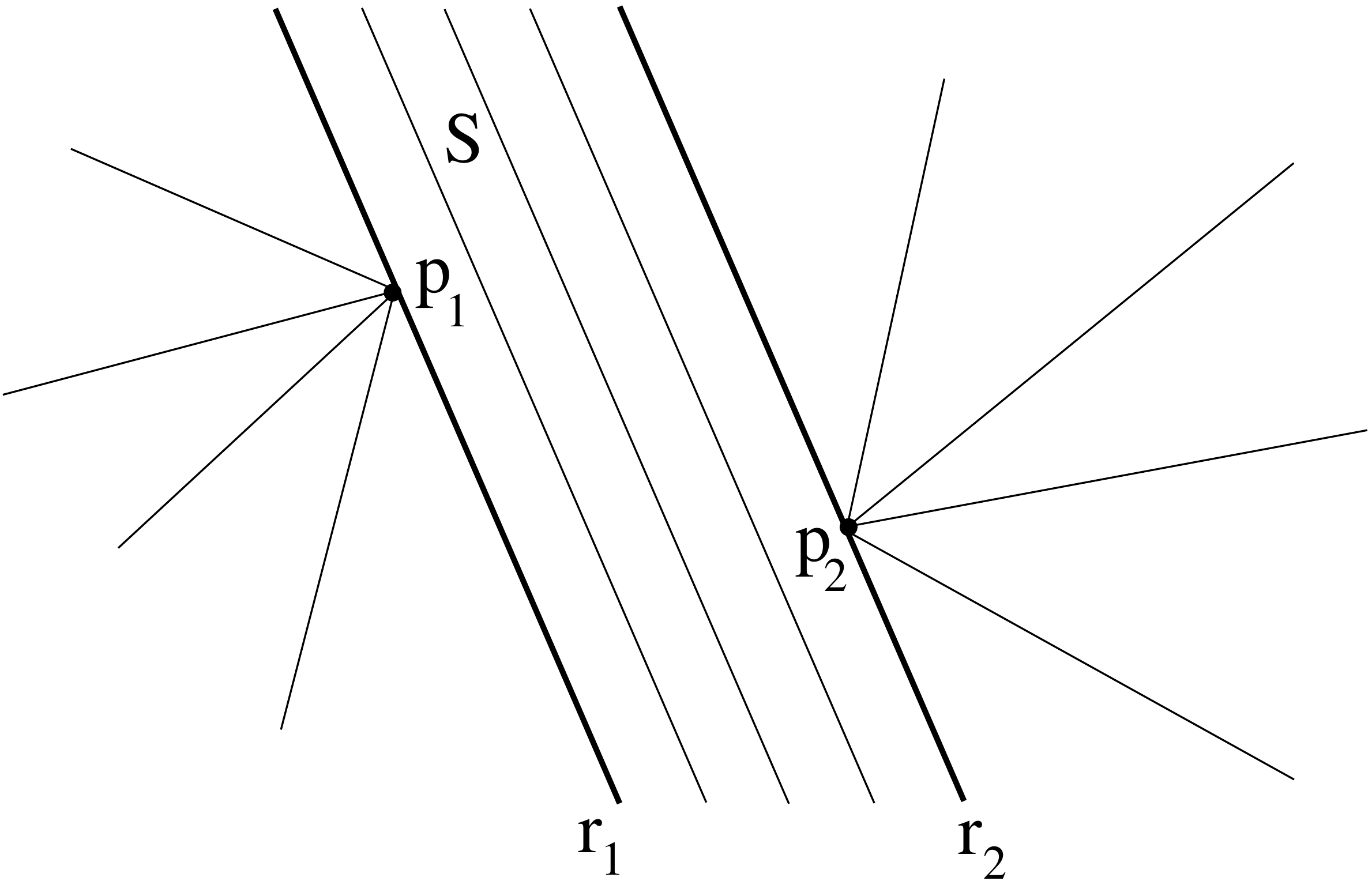} }
\caption[]{the configuration of $r(p)$ lines for example \ref{ej1}.}
\label{segundo}
\end{figure}

\begin{eje}\label{ej2}{\em 
Let $S$ be an open half-plane in $\r^2$ with boundary given by a line $r$, and $p_1,p_2\in r$. Let $h_1,h_2$ be two parallel half-lines contained in $\r^2\backslash \overline{S}$ with respective end points $p_1,p_2$. A solution $u$ to (\ref{the-equation}) can be defined in $\r^2\backslash\{p_1,p_2\}$ such that $u$ is a cylindrical function in $S$, $u$ is a linear function in the half-strip determined by $h_1,h_2$ and $r$, and $u$ is a conical function in the remaining two domains with vertices at $p_1,p_2$, respectively (Figure \ref{tercero}).}
\end{eje}
\begin{figure}[!h]
\centerline{\includegraphics[scale =0.3]{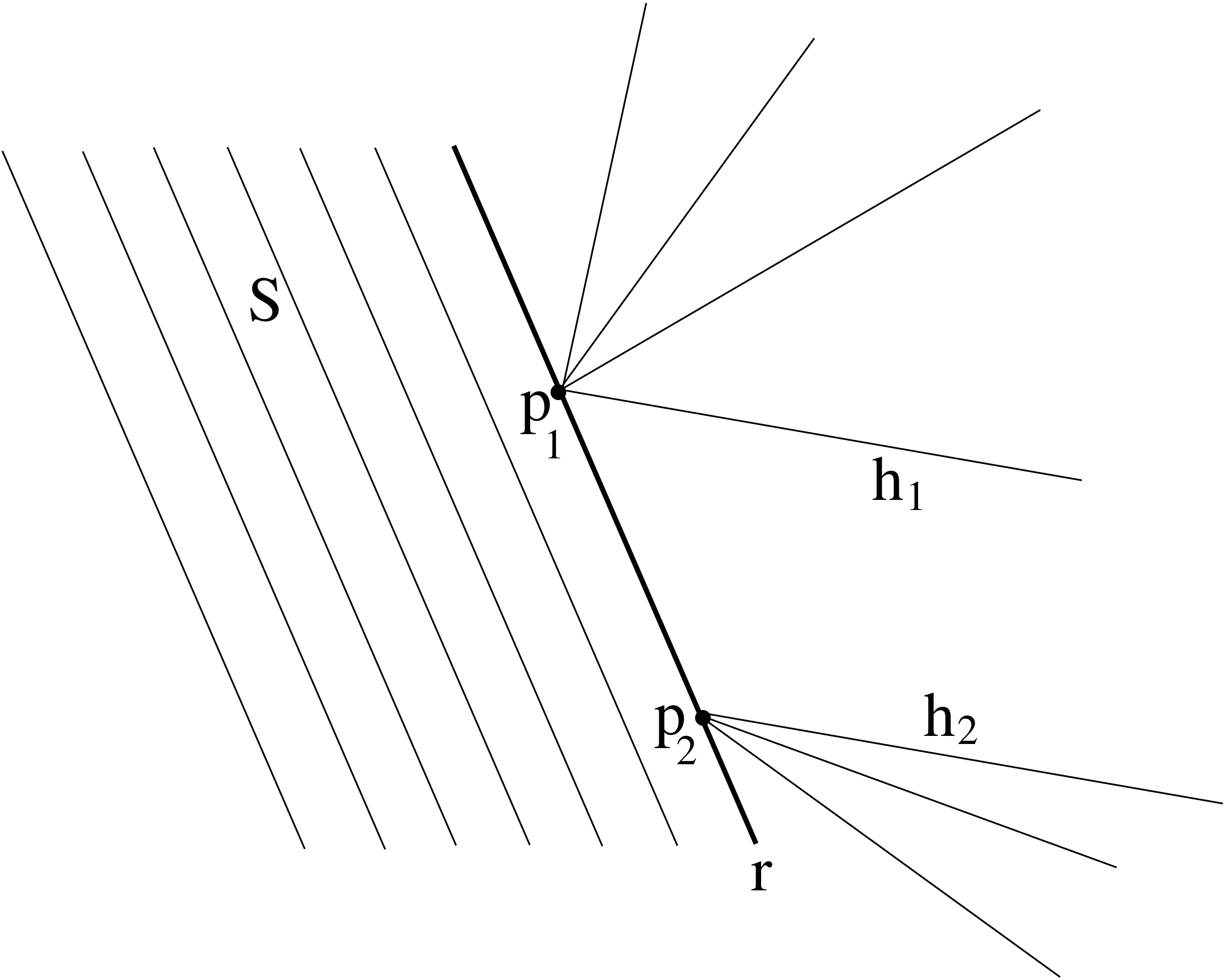} }
\caption[]{the configuration of $r(p)$ lines for example \ref{ej2}.}
\label{tercero}
\end{figure}

\begin{eje}\label{ej3}{\em 
Assume $S$ is an open half-plane in $\r^2$ with boundary given by a line $r$, $p_1\in r$, and $p_2\in\r^2\backslash \overline{S}$. Let $h_1,h_2$ be two half-lines in $\r^2\backslash S$ with end point at $p_1$ such that at most one of them is contained in $r$, and $p_2$ belongs to the closed sector determined by $h_1$ and $h_2$. Let $\overline{h_i}$ be the half-line parallel to $h_i$  with end point at $p_2$. Then, a solution $u$ to (\ref{the-equation}) can be defined in $\r^2\backslash\{p_1,p_2\}$ such that $u$ is a cylindrical function in $S$, $u$ is a linear function in the unbounded domain determined by $h_1,h_2,\overline{h_1}$ and $\overline{h_2}$, and $u$ is a conical function in the rest of domains  (Figure \ref{cuarto}).}
\end{eje}
\begin{figure}[!h]
\centerline{\includegraphics[scale =0.3]{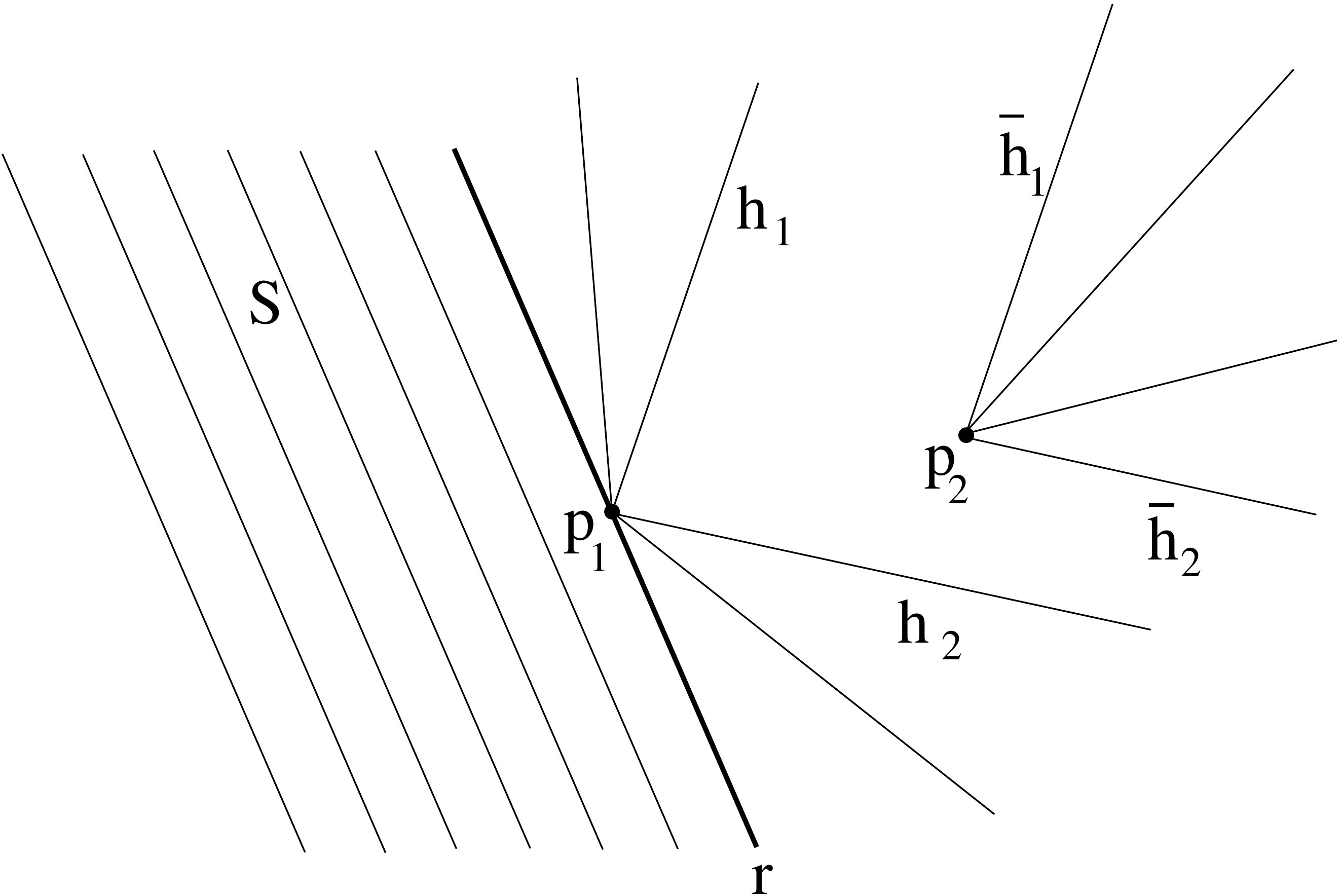}\hspace{1cm} \includegraphics[scale =0.3]{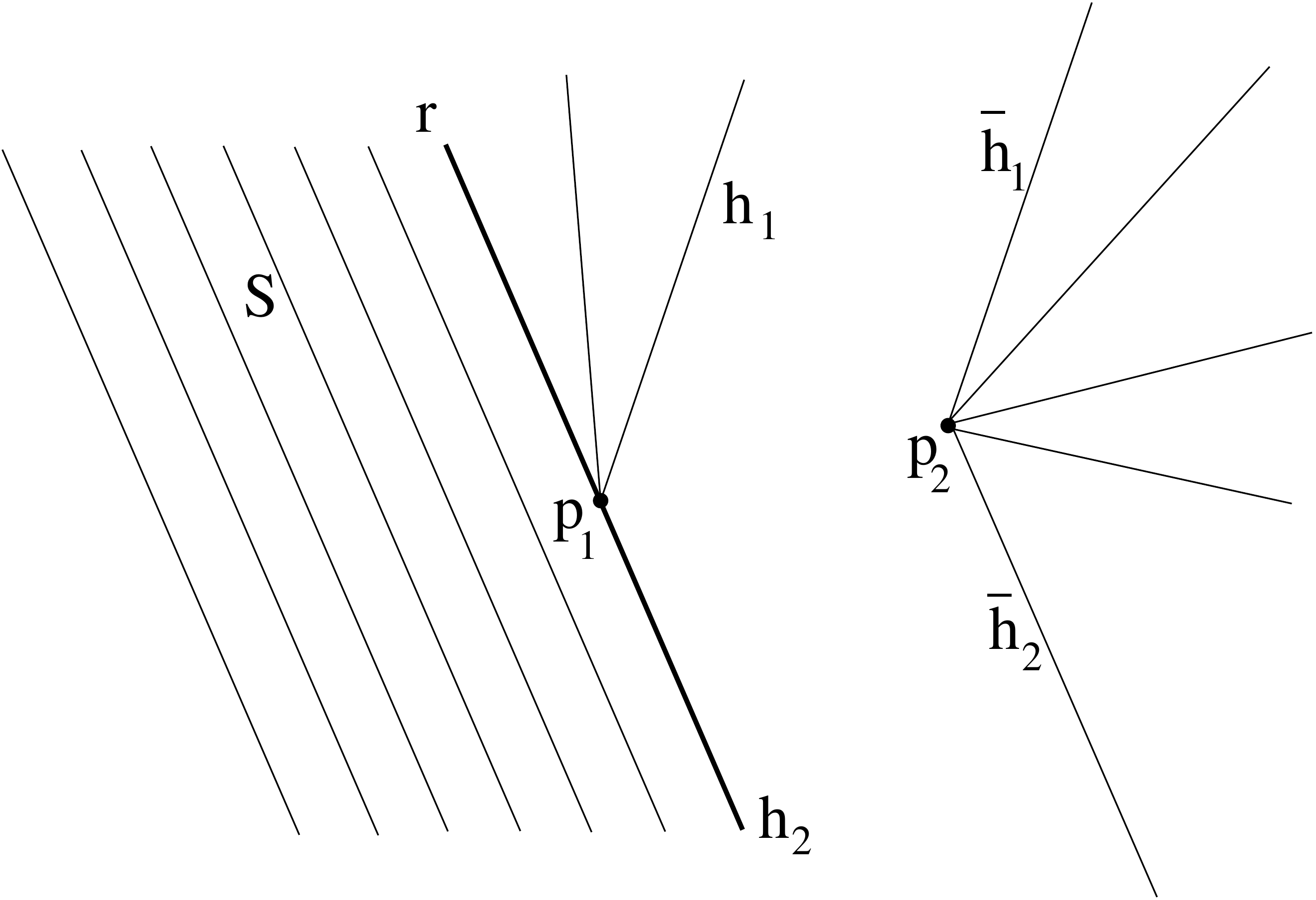} }
\caption[]{the configurations of $r(p)$ lines for example \ref{ej3}.}
\label{cuarto}
\end{figure}

\begin{eje}\label{ej4}{\em 
Consider $p_1,p_2\in\r^2$, and $S_1,S_2$ two disjoint open sectors with vertex at $p_1,p_2$, respectively. A solution $u$ to (\ref{the-equation}) can be obtained in $\r^2\backslash\{p_1,p_2\}$ such that $u$ is a conical function in $S_1$ and $S_2$, and $u$ is linear in the open set determined by $\r^2\backslash(\overline{S_1}\cup\overline{S_2})$ (if not empty). See Figure \ref{quinto}.}
\end{eje}
\begin{figure}[!h]
\centerline{\includegraphics[scale =0.3]{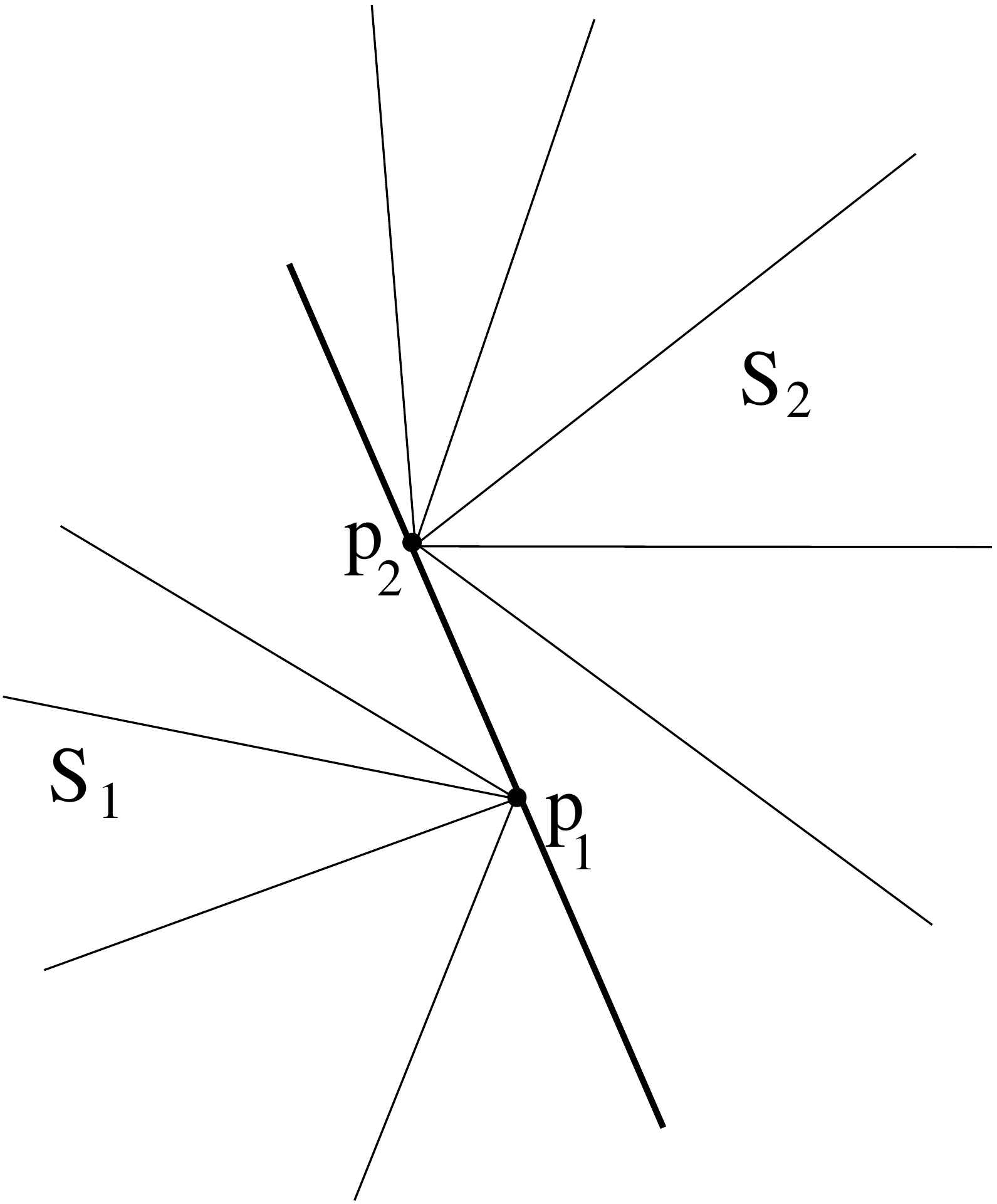}\hspace{1cm} \includegraphics[scale =0.3]{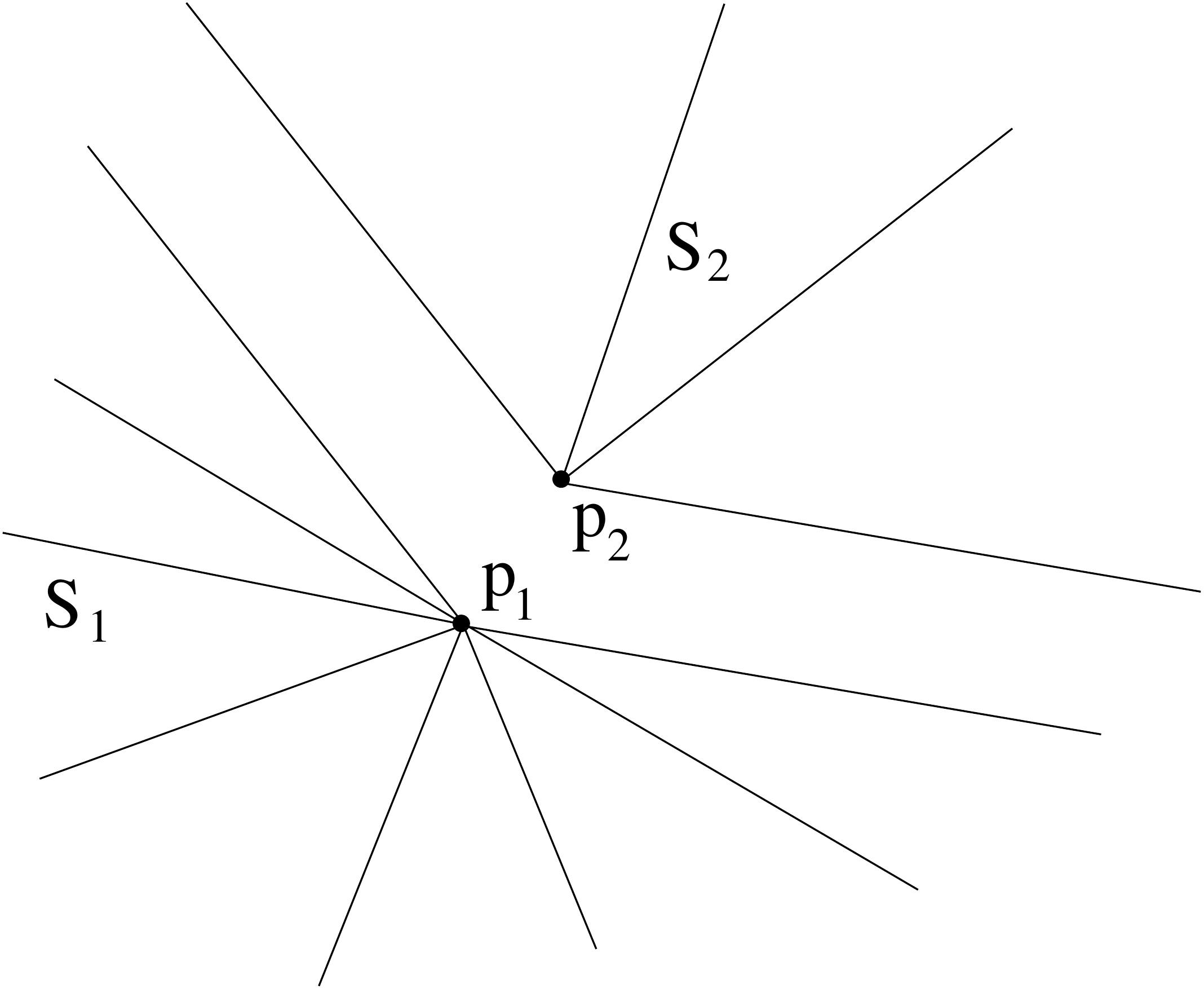}}
\caption[]{the configurations of $r(p)$ lines for example \ref{ej4}.}
\label{quinto}
\end{figure}

Our objective is to show that every solution to  (\ref{the-equation}) in the twice punctured plane is given by one of the previous examples. For that, we will need some properties associated with the sets $\n_{p_i}$ defined in (\ref{npi}).

Let $u:\r^2\backslash\{p_1,\ldots,p_n\}\fl\r$ be a ${\cal C}^2$ solution of (\ref{the-equation}) with isolated singularities at the points $p_i$. We define for each singular point
$$
{\cal V}_{p_i}=\{v\in\s^1:\text{ there exists }p\in\n_{p_i}\text{ such that }r(p)=p_i+\lambda v, \text{ with }\lambda>0\}.
$$
Observe that ${\cal V}_{p_i}$ is a non empty open set in the unit circle since $\n_{p_i}$ is a non empty open set (Corollary \ref{c1}).
\begin{lem}\label{desconexion}
Let $u:\r^2\backslash\{p_1,\ldots,p_n\}\fl\r$ be a ${\cal C}^2$ solution of (\ref{the-equation}) with isolated singularities at the points $p_i$. Consider two different singular points $p_j,p_k$, then there exist two disjoint connected open sets $U_j,U_k\subseteq\s^1$ such that ${\cal V}_{p_j}\subseteq U_j$ and ${\cal V}_{p_k}\subseteq U_k$.
\end{lem}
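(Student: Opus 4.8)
The plan is to fix the two singularities, normalise their position, and convert the statement into an elementary fact about non-crossing half-lines in the plane. Up to a rigid motion I would assume $p_j=(0,0)$ and $p_k=(a,0)$ with $a>0$, and parametrise $\s^1$ by the polar angle $\theta\in[0,2\pi)$, recording each $v\in{\cal V}_{p_j}$ (resp. $w\in{\cal V}_{p_k}$) by its angle $\alpha$ (resp. $\beta$). For such a $v$ the half-line $R_j(v)=\{p_j+\lambda v:\lambda>0\}$ equals $r(p)$ for some $p\in\n$, and likewise $R_k(w)=r(q)$ for some $q\in\n$. Two preliminary facts are needed. First, $R_j(v)$ and $R_k(w)$ are disjoint: if they shared a point $x$, then $x\in\n$ and the uniqueness in Lemma \ref{HN} would force $R_j(v)=r(x)=R_k(w)$, which is impossible since these half-lines have the distinct endpoints $p_j\neq p_k$. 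Second, by Proposition \ref{l5} no $r(p)$ is a segment joining two singular points; hence the angle $\theta=0$ cannot belong to ${\cal V}_{p_j}$, for otherwise the corresponding half-line would contain the singular point $p_k$, and symmetrically $\theta=\pi\notin{\cal V}_{p_k}$.

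I would then record when two such half-lines cross. If $p_j+\lambda v=p_k+\mu w$ with $\lambda,\mu>0$, the common point lies off the $x$-axis: the only horizontal directions not excluded by the first paragraph are $\theta=\pi$ for $p_j$ and $\theta=0$ for $p_k$, and the corresponding half-lines (the negative $x$-axis and the ray $x>a$) are disjoint, while for non-horizontal $v,w$ the half-lines meet the $x$-axis only at their endpoints. So any crossing lies strictly in the open upper or lower half-plane. Eliminating $\lambda$ from the two scalar equations yields $\mu\sin(\beta-\alpha)=a\sin\alpha$. In the open upper semicircle ($\alpha,\beta\in(0,\pi)$), where $\sin\alpha>0$, this has a solution with $\lambda,\mu>0$ precisely when $\beta>\alpha$; the symmetric computation in the open lower semicircle shows a crossing occurs precisely when $\beta<\alpha$. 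Since no half-line from $p_j$ meets any half-line from $p_k$, I conclude that every angle of ${\cal V}_{p_k}$ in the upper semicircle is at most every angle of ${\cal V}_{p_j}$ there, while in the lower semicircle the reverse inequality holds.

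Finally I would assemble the two separating arcs. As ${\cal V}_{p_j}$ and ${\cal V}_{p_k}$ are open (Corollary \ref{c1}), the suprema and infima separating the two sets in each semicircle are not attained, so I may pick angles $s^+\in(0,\pi)$ and $s^-\in(\pi,2\pi)$ with ${\cal V}_{p_k}\cap(0,\pi)\subseteq(0,s^+)$, ${\cal V}_{p_j}\cap(0,\pi)\subseteq(s^+,\pi)$, ${\cal V}_{p_j}\cap(\pi,2\pi)\subseteq(\pi,s^-)$ and ${\cal V}_{p_k}\cap(\pi,2\pi)\subseteq(s^-,2\pi)$. Taking $U_j=(s^+,s^-)$, the open arc through $\theta=\pi$, and $U_k=(s^-,s^+)$, the complementary open arc through $\theta=0$, gives two disjoint connected open subsets of $\s^1$. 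Using the exclusions $0\notin{\cal V}_{p_j}$ and $\pi\notin{\cal V}_{p_k}$ to absorb the two horizontal directions, one verifies ${\cal V}_{p_j}\subseteq U_j$ and ${\cal V}_{p_k}\subseteq U_k$, as desired.

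I expect the heart of the argument to be the crossing criterion of the second paragraph together with the bookkeeping that merges the two independent semicircular separations into a single pair of complementary arcs; in particular, the correct assignment of the horizontal directions $\theta=0,\pi$ is exactly the point where the no-segment conclusion of Proposition \ref{l5} is indispensable, and the openness of the ${\cal V}_{p_i}$ is what upgrades the weak order inequalities into genuinely disjoint open arcs.
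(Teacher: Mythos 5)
Your reduction to a non-crossing statement for half-lines, the disjointness of $R_j(v)$ and $R_k(w)$ via Lemma \ref{HN}, and the crossing criterion $\mu\sin(\beta-\alpha)=a\sin\alpha$ (a crossing exists iff $\beta>\alpha$ in the open upper semicircle, iff $\beta<\alpha$ in the open lower one) are all correct; up to coordinates this is the same separation mechanism the paper uses. The gap is in the final assembly. You insist on cut angles $s^+\in(0,\pi)$ and $s^-\in(\pi,2\pi)$, which forces $U_j=(s^+,s^-)$ to contain the direction $\theta=\pi$ and $U_k=(s^-,s^+)$ to contain $\theta=0$. The order relations you derived do not guarantee this is possible: they do not prevent, say, ${\cal V}_{p_j}\cap(0,\pi)=\emptyset$ while ${\cal V}_{p_k}\cap(0,\pi)$ accumulates at $\pi$. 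Concretely, take ${\cal V}_{p_k}=(\pi/2,\pi)$ and ${\cal V}_{p_j}=(5\pi/4,7\pi/4)$, realized by a solution of the type of Example \ref{ej4} in which $p_j$ lies on the boundary ray of $\overline{S_{\pi/2}^{\pi}(p_k)}$; no two half-lines cross, yet there is no $s^+\in(0,\pi)$ with $(\pi/2,\pi)\subseteq(0,s^+)$, and the correct $U_k$ here is a single arc containing both $0$ and $\pi$. So the issue is not whether "the suprema and infima are attained": when exactly one of the two trace sets in a semicircle is empty, the other may reach all the way to the endpoint $0$ or $\pi$, and your two prescribed cut points simply do not exist.

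The lemma is still true and your argument is repairable --- allow $s^+$ (resp.\ $s^-$) to equal $0$ or $\pi$ in the degenerate cases, checking each time that the chosen endpoint lies in neither ${\cal V}_{p_j}$ nor ${\cal V}_{p_k}$ (which does follow from openness plus your inequalities) --- but that is a case analysis you have not carried out. The paper avoids it altogether: for each pair $v_0,v\in{\cal V}_{p_j}$ it takes the arc $A_v$ of $\s^1\backslash\{v_0,v\}$ containing the direction $u_0$ from $p_j$ to $p_k$, shows ${\cal V}_{p_k}\subseteq A_v$ by the same non-crossing observation, and intersects; the resulting set $A$ is convex in $\s^1\backslash\{v_0\}$, so $U_k=int(A)$ and $U_j=\s^1\backslash\overline{A}$ are automatically disjoint open arcs, pinned only at the single direction $u_0$ rather than at two antipodal directions fixed in advance.
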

\begin{proof}
Let $u_0=\dfrac{p_k-p_j}{|p_k-p_j|}$ and $v_0,v\in{\cal V}_{p_j}$. It is clear that $v_0$ and $v$ are different from $u_0$ because the half-line $p_j+\lambda u_0,\lambda>0,$ contains the singular point $p_k$ (see Figure \ref{primera}). If $v_0\neq v$, the set $\s^1\backslash\{v_0,v\}$ has two connected components. Let $A_v$ be the connected component containing the unit vector $u_0$, then we want to show that ${\cal V}_{p_k}\subseteq A_v$.

Observe that $p_j$ and the half-lines $p_j+\lambda v_0$, $p_j+\lambda v$, with $\lambda>0$, separate the plane into two connected components. Let $\Omega$ be the connected component containing $p_k$. Then, for $w\in\s^1$, it is easy to check that if the half-line $p_k+\lambda w$, $\lambda>0$, is contained in $\Omega$ then $w\in A_v\cup\{v_0,v\}$ (Figure \ref{primera}). As a consequence, ${\cal V}_{p_k}$ must be contained in $A_v$.

Thus, ${\cal V}_{p_k}$ is contained in $A=\cap_{v\in{\cal V}_{p_j}}A_{v}$. Since the intersection of convex sets of a punctured circle (which is homeomorphic to $\r$), $\s^1\backslash\{v_0\}$, must be convex, and since ${\cal V}_{p_k}$ is an open set of $\s^1$, we have that ${\cal V}_{p_k}$ is contained in the open connected set given by the interior of $A$. Moreover, by construction, the open set ${\cal V}_{p_j}$ is contained in $\s^1\backslash\overline{A}$.
\end{proof}
\begin{figure}[!h]
\centerline{\includegraphics[scale =0.3]{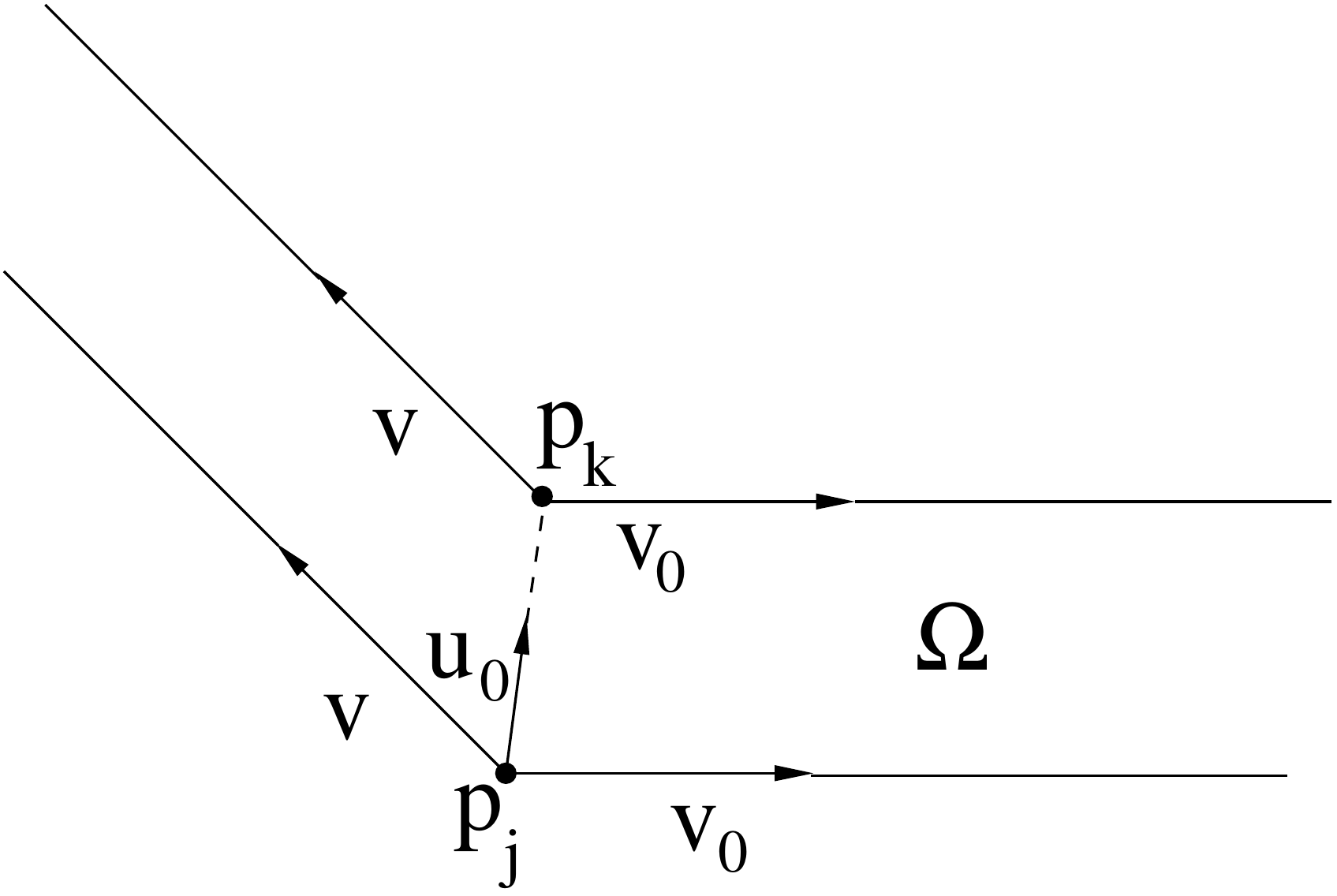}\hspace{1.5cm} \includegraphics[scale =0.3]{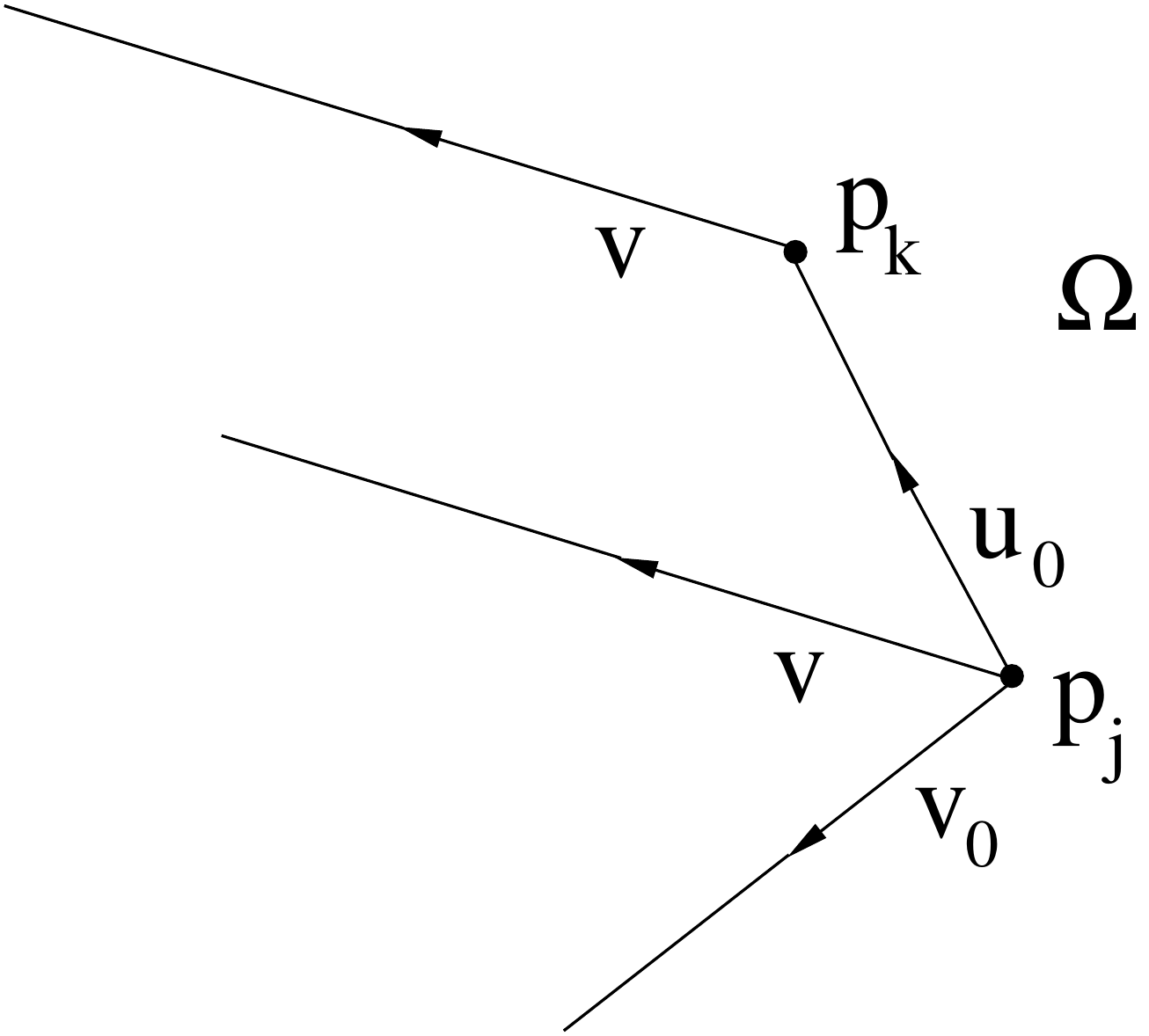} }
\caption[]{proof of Lemma \ref{desconexion}.}
\label{primera}
\end{figure}
\begin{teo}
Let $u:\r^2\backslash\{p_1,p_2\}\fl\r$ be a ${\cal C}^2$ solution of (\ref{the-equation}) with two singular points $p_1,p_2$. Then, $u$ is contained in one of the families given by Examples \ref{ej1}, \ref{ej2}, \ref{ej3} or \ref{ej4}.
\end{teo}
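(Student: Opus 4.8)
The plan is to run a case analysis driven by the dichotomy \emph{``is there a point $p\in\n$ with $r(p)$ a line?''}. The starting point is Proposition \ref{l5}, which guarantees that for every $p\in\n$ the piece $r(p)$ is either a whole line or a half-line ending at one of the two singularities $p_1,p_2$ (segments joining the two singular points are excluded). Thus the whole of $\n$ is organized into cylindrical pieces (lines) and conical pieces (half-lines through $p_1$ or $p_2$), and the task is to show that the four possible global arrangements are precisely Examples \ref{ej1}--\ref{ej4}.

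First I would treat the case in which \emph{no} $r(p)$ is a line. Then $\n=\n_{p_1}\cup\n_{p_2}$, and by Corollary \ref{c1} each $\n_{p_i}$ is a nonempty union of open sectors on which $u$ is conical with vertex $p_i$. Lemma \ref{desconexion} supplies disjoint connected arcs $U_1,U_2\subseteq\s^1$ with $\mathcal{V}_{p_i}\subseteq U_i$, so the conical directions at $p_1$ and at $p_2$ never interleave; this lets me enclose the two conical families in two disjoint sectors $S_1,S_2$ with vertices $p_1,p_2$. On the complement $\r^2\setminus(\overline{S_1}\cup\overline{S_2})$ every interior point lies in $\u$, so $u$ is linear there by the plane-piece property of $\Sigma_{\u}$. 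This is exactly the configuration of Example \ref{ej4}.

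Next, assume some $r(p)$ is a line. Proposition \ref{max-strip} then produces a maximal cylindrical region $S(p)$ (a strip or a half-plane) and forbids any further line of $\n$ or $\u$ outside $S(p)$; hence on each connected component of $\r^2\setminus\overline{S(p)}$ the set $\n$ consists of half-lines only, so by Lemma \ref{l3} $u$ is conical there, with vertex at $p_1$ or $p_2$. Maximality forces each finite boundary line of $S(p)$ to carry a singular point, and with exactly two singularities this splits into three configurations: (i) $S(p)$ is a genuine strip whose two boundary lines carry $p_1$ and $p_2$ respectively (Example \ref{ej1}); (ii) $S(p)$ is a half-plane whose boundary line carries both $p_1$ and $p_2$ (Example \ref{ej2}); (iii) $S(p)$ is a half-plane whose boundary carries one singularity while the other lies in the open complementary half-plane (Example \ref{ej3}). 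In each configuration I would pin down the vertex of every complementary conical sector by observing that a half-line issuing from the ``wrong'' singularity would have to cross the parallel generators of the cylinder, contradicting the pairwise disjointness of the lines $r(q)$ from Lemma \ref{HN}; Lemma \ref{desconexion} again separates the two conical families, while Lemma \ref{l4} shows the pieces match in an umbilic fashion precisely along the glueing segments, yielding the linear region described in each of those examples.

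The main obstacle I expect is the glueing bookkeeping in the complementary regions of the with-a-line case: deciding which singularity serves as vertex for each sector, locating the possibly empty linear region, and verifying that these finitely many conical and cylindrical pieces assemble into \emph{exactly} one of the four templates rather than some hybrid. All the analytic content is already packaged in Lemma \ref{HN}, Lemma \ref{l3}, Lemma \ref{l4} and Proposition \ref{max-strip}; what remains is the combinatorics of how pieces can meet across two singular points, and here Lemma \ref{desconexion} is the decisive ingredient, since by preventing $\mathcal{V}_{p_1}$ and $\mathcal{V}_{p_2}$ from interleaving it rules out the configurations that do not appear among Examples \ref{ej1}--\ref{ej4}.
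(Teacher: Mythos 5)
Your proposal is correct and follows essentially the same route as the paper: the same dichotomy on whether some $r(p)$ is a line, the same three sub-configurations of the maximal strip/half-plane from Proposition \ref{max-strip} matched to Examples \ref{ej1}--\ref{ej3}, and the same use of Lemma \ref{desconexion} together with Corollary \ref{c1} and Lemma \ref{l3} to handle the purely conical case of Example \ref{ej4}. The only difference is the (immaterial) order of the cases.
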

\begin{proof}
We will divide the proof in different cases:

{\it Case 1:} Assume there exists a point $p\in\n$ such that $r(p)$ is a line. Then, from Proposition \ref{max-strip}, there exists a maximal open strip $S(p)$ which is contained in $\r^2\backslash\{p_1,p_2\}$ such that each connected component of its boundary in $\r^2$ has at least a singular point. Moreover, $u$ is a cylindrical function in $S(p)$.

{\it Case 1.1:} If the boundary of $S(p)$ is made by two lines, then each line must contain one of the two singular points. In addition, if a point $q\in\n$ is not in the strip $S(p)$ then $r(q)$ is a half-line with end point at a singular one. Hence, from Lemma \ref{l3}, $u$ is a conical function in each half-plane of $\r^2\backslash\overline{S(p)}$ and is determined by Example \ref{ej1}.

{\it Case 1.2:} If the boundary of $S(p)$ in $\r^2$ is only one line, then two different situations can happen. Up to a translation or rotation, we can assume $S(p)$ is the open half-plane $y<0$:

{\it Case 1.2.1:} If both singularities are in the boundary of $S(p)$ then we can write $p_1=(x_1,0)$, $p_2=(x_2,0)$, with $x_1<x_2$. Then, from Lemma \ref{desconexion}, there exists $\theta_0\in(0,\pi)$ such that $\n_{p_2}$ is contained in the open sector with vertex at $p_2$ and angles between $0$ and $\theta_0$:
$$
S_0^{\theta_0}(p_2)=\{p_2+\rho\,(\cos\theta,\sin\theta):\rho>0,\ 0<\theta<\theta_0\},
$$
and $\n_{p_1}$ is contained in the open sector $S_{\theta_0}^{\pi}(p_1).$ So, $u$ is a conical function in $S_0^{\theta_0}(p_2)$ and also in $S_{\theta_0}^{\pi}(p_1).$ Hence, $u$ is given as in Example \ref{ej2}.

{\it Case 1.2.2:} If only one singularity is in the boundary of $S(p)$, then we have $p_1=(x_1,0)$,  and $p_2=(x_2,y_2)$, with $y_2>0$. Using again Lemma \ref{desconexion}, there exist $\theta_1,\theta_2\in[0,\pi]$, $\theta_1<\theta_2$, such that $\n_{p_2}\subseteq S_{\theta_1}^{\theta_2}(p_2)$ and $\n_{p_1}\subseteq S_{0}^{\theta_1}(p_1)\cup S_{\theta_2}^{\pi}(p_1)$. Here,  $S_{0}^{\theta_1}(p_1)$, $ S_{\theta_2}^{\pi}(p_1)$ could be the empty set if $\theta_1=0$ or $\theta_2=\pi$, respectively, but both of them cannot be empty since $\n_{p_1}\neq\emptyset$. Thus, $u$ belongs to the family determined by Example \ref{ej3}.

{\it Case 2:} If there is no $p\in\n$ so that $r(p)$ is a line then, from Lemma \ref{desconexion}, there exist angles $\theta_1<\theta_2\leq\theta_3<\theta_4\leq\theta_1+2\pi$, satisfying $\n_{p_1}\subseteq S_{\theta_1}^{\theta_2}(p_1)$ and $\n_{p_2}\subseteq S_{\theta_3}^{\theta_4}(p_2)$. Therefore, $u$ is a conical function in both sectors $S_{\theta_1}^{\theta_2}(p_1)$, $S_{\theta_3}^{\theta_4}(p_2)$, and is given as in Example \ref{ej4}.
\end{proof}

\section{Entire solutions with more than two  singular points.}\label{s5}

The results in Section \ref{s3} establish that every solution $u$ to the degenerate Monge-Ampère equation (\ref{the-equation}) in \break $\r^2\backslash\{p_1,\ldots,p_n\}$ must be a conical function in at least a sector with vertex at each singular point $p_i$ and, in addition, there exists at most one maximal strip where $u$ is a cylindrical function. However, when the number of singular points is  larger  than two, the number of different solutions to (\ref{the-equation}) is very large. This is basically due to the fact that one can find solutions $u$ to (\ref{the-equation}) defining $u$ as a linear function in some half-strips of the domain in such a way that $u$ is glued to the conical parts of the function, or its cylindrical part.

We will focus our attention on the classification of the solutions to (\ref{the-equation}) which do not contain a half-strip where the graph is planar.
\begin{defi}{\em 
Let $u:\r^2\backslash\{p_1,\ldots,p_n\}\fl\r$ be a ${\cal C}^2$ solution of (\ref{the-equation}) with isolated singularities at the points $p_i$. We say that $u$ is an {\em admissible solution} if there is no half-strip in $\r^2\backslash\{p_1,\ldots,p_n\}$ where $u(x,y)$ is linear.}
\end{defi}
\begin{lem}\label{angulo}
Let $u:\r^2\backslash\{p_1,\ldots,p_n\}\fl\r$ be an admissible ${\cal C}^2$ solution of (\ref{the-equation}) with isolated singularities at the points $p_i$. Assume there exists a closed sector $S$ in $\r^2$ of angle less than $\pi$ with vertex at a fixed singular point $p_{j_0}$, such that
 $\partial S\cap\n=\emptyset$. Moreover, suppose $\n_{p_j}\subseteq S$ for every singular point $p_j\in S\backslash\{p_{j_0}\}$.

Then, there is no singular point in $S$ different from $p_{j_0}$.
\end{lem}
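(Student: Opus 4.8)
The plan is to argue by contradiction, so suppose there is a singular point $q\in S\backslash\{p_{j_0}\}$. After a rigid motion I may place $p_{j_0}$ at the origin and write $S=\{\rho(\cos\theta,\sin\theta):\rho\ge 0,\ |\theta|\le\delta\}$ with $2\delta<\pi$; the two bounding rays $E^{+},E^{-}$ (at $\theta=\delta$ and $\theta=-\delta$) are then disjoint from $\n$ by hypothesis, and $\r^2\backslash(E^{+}\cup E^{-})$ has the open sector $\mathrm{int}(S)$ as one of its two connected components. First I would fix the structure of $\n$ inside $S$. If $p\in\mathrm{int}(S)\cap\n$, then $r(p)\subseteq\n$ (Lemma \ref{HN}) is connected, meets $\mathrm{int}(S)$ and avoids $E^{+}\cup E^{-}$, hence $r(p)\subseteq\mathrm{int}(S)$. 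Since a sector of angle $<\pi$ contains no full line, $r(p)$ is not a line, and by Proposition \ref{l5} it is not a segment; thus $r(p)$ is a half-line whose endpoint is a singular point $p_k\in S$. As this half-line remains in the convex cone $S$ all the way to infinity, its direction must lie in the recession cone $\{|\theta|\le\delta\}$. Consequently every point of $\mathrm{int}(S)\cap\n$ lies on a \emph{forward} half-line emanating from one of the finitely many singular points in $S$, and, using $\n_{p_k}\subseteq S$ for the interior singularities together with Corollary \ref{c1}, one has $\mathrm{int}(S)\cap\n=\bigcup_{p_k\in S}\bigl(p_k+\{\lambda v:\lambda>0,\ v\in{\cal V}_{p_k}\}\bigr)$ with every relevant direction in $\{|\theta|\le\delta\}$.

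The goal is then to exhibit a half-strip on which $u$ is linear, contradicting admissibility. The key geometric observation is that the forward rays issued by an \emph{interior} singular point $p_k$ (one lying strictly inside $S$) remain at distance at least $\mathrm{dist}(p_k,E^{+})>0$ from $E^{+}$, and likewise from $E^{-}$. Hence, along a sufficiently thin half-strip adjacent to $E^{+}$ and sufficiently far from the origin, the only conical fans that can reach it are those based at singular points lying on $E^{+}$ itself, notably the fan of the vertex $p_{j_0}$. In particular, if $\sup\{\theta:(\cos\theta,\sin\theta)\in{\cal V}_{p_{j_0}},\ \theta<\delta\}<\delta$, then this far half-strip along $E^{+}$ meets no fan at all, so it lies in $\u$ and $u$ is linear there, contradicting admissibility; the symmetric statement holds for $E^{-}$.

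It remains to rule out the case in which the fan of $p_{j_0}$ accumulates at both edge directions $\pm\delta$ from inside $S$. Here I would invoke the separation Lemma \ref{desconexion} for the pair $p_{j_0},q$: writing $u_0$ for the direction from $p_{j_0}$ to $q$ (an angle in $(-\delta,\delta)$), the arcs ${\cal V}_{p_{j_0}}$ and ${\cal V}_q$ are disjoint, with ${\cal V}_q$ on the side of $u_0$ and ${\cal V}_{p_{j_0}}$ in the complementary arc. Thus ${\cal V}_{p_{j_0}}$ must leave an angular gap around $u_0$, which inside $S$ can only be filled by the fan of $q$. Comparing the extreme conical ray of $q$ adjacent to this gap with the corresponding boundary ray of the fan of $p_{j_0}$, either their directions disagree and an angular gap survives far out as a thin sector lying in $\u$, or they agree and, issuing from the two distinct vertices $p_{j_0}$ and $q$, they are parallel and bound an offset strip which, being covered by no forward fan, lies in $\u$ (its bounding rays being umbilic, cf. Lemma \ref{l4}). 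In both situations one obtains a half-strip on which $u$ is linear, contradicting admissibility, and this contradiction shows that no $q$ exists.

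The step I expect to be the main obstacle is the bookkeeping in the last two paragraphs when $S$ contains several interior singular points at once, or further singular points on the edges $E^{\pm}$: one must guarantee that the linear region produced near the gap, or along an edge, genuinely contains a half-strip and is not secretly covered by the fan of a third singular point. The robust ingredient that makes this work is the distance estimate of the second paragraph — interior singularities provably cannot reach the thin far strips along $\partial S$ — so the whole burden of covering those strips falls on singular points of $\partial S$, whose fans are in turn constrained by Lemma \ref{desconexion}; choosing $q$ extremal (for instance farthest from $p_{j_0}$ in the bisector direction of $S$) keeps this comparison clean.
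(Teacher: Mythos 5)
Your opening paragraph is sound and agrees with the paper: every $r(p)$ for $p\in \mathrm{int}(S)\cap\n$ is a half-line with endpoint at a singular point of $S$ and direction in the recession cone of $S$. From there, however, the argument has a genuine gap, and it is exactly the one you flag at the end as ``the main obstacle'': you never prove that the region you want to place in $\u$ (the thin far strip along an edge, the surviving angular gap, or the offset strip between two parallel boundary rays) is avoided by \emph{all} the fans $\n_{p_j}$, $p_j\in S$, rather than just by the one or two fans you happen to be comparing. With several singular points on $E^{\pm}$ or in the interior of $S$, a third fan can enter the offset strip through its near end, or fill the angular gap left between the extreme rays of $p_{j_0}$ and $q$; excluding this is the actual content of the lemma, not bookkeeping. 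Two further points need repair. First, the quantitative claim in your second paragraph is false as stated: a forward ray from an interior singular point $p_k$ with direction close to that of $E^{+}$ can come much closer to $E^{+}$ than $\mathrm{dist}(p_k,E^{+})$ (take $p_k$ near $E^{-}$ in a sector of angle close to $\pi$); what is true, and what you need, is the lower bound $\mathrm{dist}(p_k,\ell^{+})$ by the distance to the \emph{line} $\ell^{+}$ containing $E^{+}$, which is still positive. Second, Lemma \ref{desconexion} only places ${\cal V}_{p_{j_0}}$ and ${\cal V}_q$ in disjoint arcs; it does not by itself force the configuration ``extreme ray of $q$ adjacent to a gap of ${\cal V}_{p_{j_0}}$'' on which your final dichotomy rests, nor does it control the fans of the remaining singular points.

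For comparison, the paper sidesteps this combinatorial difficulty with an extremal argument: it fixes a horizontal line $y=L$ above all singular points, a \emph{highest} singular point $p_k\neq p_{j_0}$ in $S$, and the extremes $m$ and $M$ of the set of abscissas $\{x\in\r:(x,L)\in\n_{p_k}\}$. Admissibility is used twice (to show a certain set of abscissas to the left of $m$ is nonempty, and that its supremum equals $m$) to produce, at each extreme not lying on $\partial S$, an adjacent fan based at a singular point that must lie on the line through $p_k$ and $(m,L)$ (resp.\ $(M,L)$) strictly \emph{below} $p_k$. If $p_k\in\partial S$ this forces a singular point outside $S$; if $p_k\notin\partial S$ the two adjacent fans must intersect. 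Either way a contradiction is reached without ever having to decide which of the many fans covers a candidate strip. Your route is not obviously unworkable, but as written it leaves the decisive step unproved.
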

\begin{proof}
Up to a translation and rotation, we can assume
$$
S=\{\rho\,(cos\theta,\sin\theta):\rho\geq0, \theta_0\leq\theta\leq\pi-\theta_0\}
$$
for a certain $\theta_0\in(0,\pi/2)$, with singular point $p_{j_0}=(0,0)$.

Observe that if $p\in S\cap\n$ then $r(p)$ can only be a half-line with end point at a singular point of $S$, since $\partial S\cap\n=\emptyset$. Moreover, $r(p)$ must be given as $p_j+\lambda (\cos\theta,\sin\theta)$ for some singular point $p_j\in S$, $\lambda>0$, and $\theta_0\leq\theta\leq\pi-\theta_0$.  In fact, $\theta\neq \theta_0,\pi-\theta_0$ because $\n_{p_j}$ is an open set.

Suppose there is a singular point in $S$ different from $p_{j_0}=(0,0)$. We choose $L>0$ big enough in such a way that every singular point is below the horizontal line $y=L$, and consider a highest singular point $p_k=(x_k,y_k)\in S$. For this point, the set $\{x\in\r: (x,L)\in\n_{p_k}\}$ is bounded from below and from above, since $\n_{p_k}\subseteq S$. Thus, we define the real numbers
$$
m=\inf\{x\in\r: (x,L)\in\n_{p_k}\},\qquad M=\sup\{x\in\r: (x,L)\in\n_{p_k}\}.
$$

Note that if $(m,L)\in\partial S$ then $p_k$ belongs to the segment joining $(m,L)$ and the vertex $p_{j_0}=(0,0)$, which is contained in $\partial S$. Otherwise, by definition of $m$ we would have $\n_{p_k}\cap\partial S\neq\emptyset$, which contradicts $\n\cap\partial S=\emptyset$. Analogously, if $(M,L)\in\partial S$ then $p_k$ belongs to the segment joining $(M,L)$ and the vertex $p_{j_0}$. In particular, $(m,L)$ and $(M,L)$ are not in $\partial S$ at the same time.

{\it Claim 1}: If $(m,L)\not\in\partial S$ then there exists a singular point $p_m\in S$ in the line $(1-\lambda)p_k+\lambda(m,L)$, with $\lambda<0$, such that $(m,L)\in\overline{\n_{p_m}}$ (see Figure \ref{pm}).

\begin{figure}[!h]
\centerline{\includegraphics[scale =0.3]{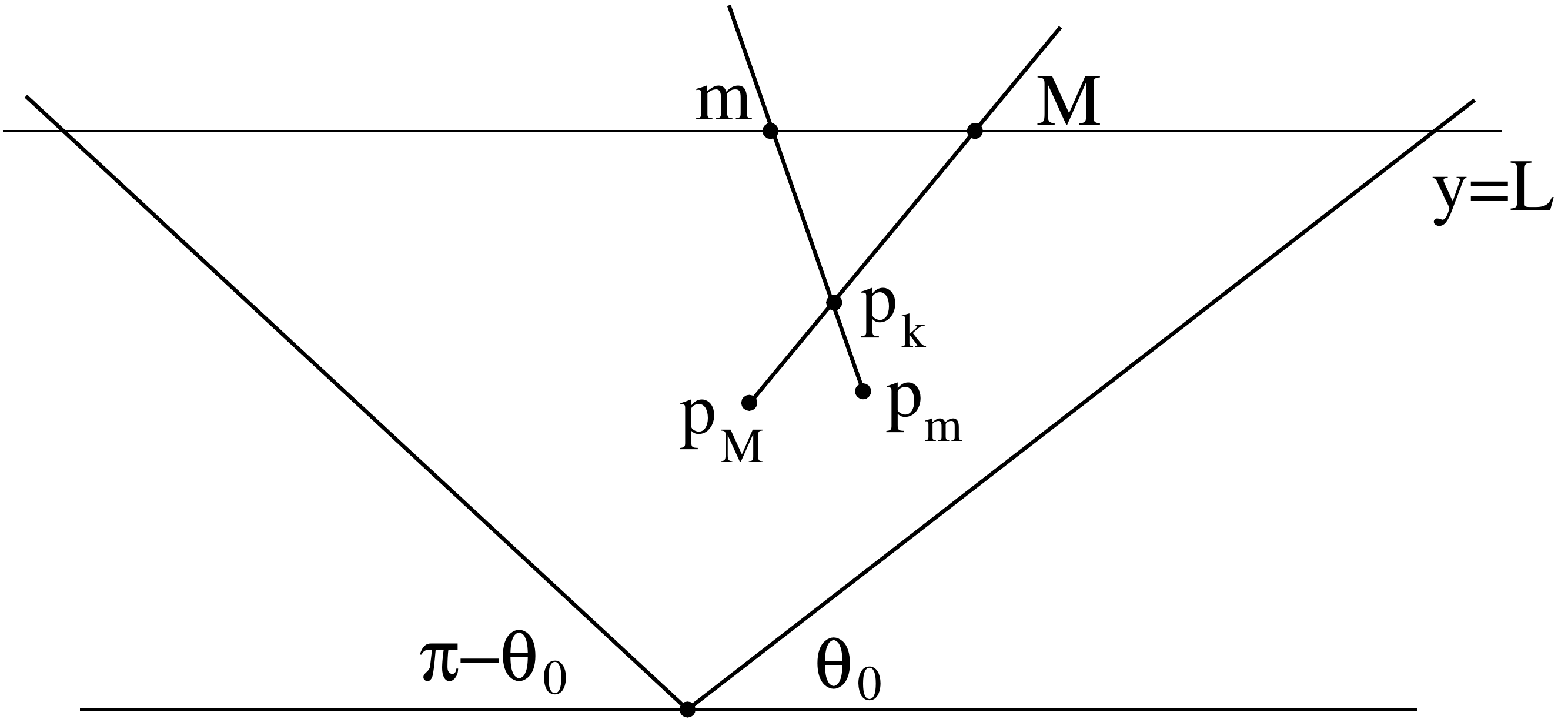}}
\caption[]{proof of Claim 1 of Lemma \ref{angulo}}
\label{pm}
\end{figure}

In order to prove the claim, consider the set ${\cal I}=\{x\in\r: x<m, (x,L)\in\n\cap S\}$. ${\cal I}$ cannot be empty, otherwise the domain in $S$ determined by the half-line $(1-\lambda)p_k+\lambda(m,L)$, with $\lambda\geq1$, the segment $\{(x,L)\in S: x\leq m\}$ and $\partial S$, would give rise to an unbounded domain included in $\u$, since its boundary lies in $\u$ and every singular point is outside of this domain. As the domain contains a half-strip, the solution would not be admissible.

Let $\overline{m}$ be the supremum of ${\cal I}$, then we want to see that $\overline{m}=m$. If $\overline{m}<m$ then the half-line $r_m$ given by $(1-\lambda)p_k+\lambda(m,L)$, with $\lambda\geq1$, its parallel half-line $r_{\overline{m}}$ with end point $(\overline{m},L)$, and the segment $\{(x,L)\in S: \overline{m}\leq x\leq m\}$ will
determine a half-strip $H$ in $\u$. For that, it is enough to show that $r_{\overline{m}}$ is contained in $\u$, since, as above, $\partial S$ would be included in $\u$ and every singular point would be outside $H$. Hence, $H\subseteq\u$ and the solution would not be admissible.

If there exists $p\in r_{\overline{m}}\cap\n$ then $r(p)$ is given by $p_j+\lambda v_0$, for a certain singular point $p_j\in S$, $\lambda>0$, $v_0\in\s^1$. Observe that $r(p)$ cannot intersect $r_m$ or the segment $\{(x,L)\in S: \overline{m}\leq x\leq m\}$. However, $p_j$ is below the line $y=L$, so if $r(p)\cap r_{\overline{m}}\neq\emptyset$ then $r(p)$ must intersect the parallel half-line $r_m$, which is a contradiction. Therefore, $\overline{m}=m$.

From the definition of $\overline{m}=m$, there exists a sequence of points $q_i=(x_i,L)\in\n$ tending to $(m,L)$ with $x_i<m$. Passing to a subsequence, if necessary, we can assume all $r(q_i)$ have a
common singular end point $p_m\in S$. This shows that $(m,L)\in\overline{\n_{p_m}}$.

Moreover, since $r(q_i)$ cannot intersect the half-line $(1-\lambda)p_k+\lambda(m,L)$, with $\lambda\geq0$, then $p_m$ belongs to the line $(1-\lambda)p_k+\lambda(m,L)$, with $\lambda\in\r$. But, $p_k$ is a highest point, so $p_m$ is below $p_k$, which proves the claim.

In an analogous way to the previous claim, one can prove:

{\it Claim 2}: If $(M,L)\not\in\partial S$ then there exists a singular point $p_M\in S$ in the line $(1-\lambda)p_k+\lambda(M,L)$, with $\lambda<0$, such that $(M,L)\in\overline{\n_{p_M}}$ (see Figure \ref{pm}).

Finally, in order to contradict the existence of the singular point $p_k$ and so conclude the proof, we distinguish two cases.

Assume $p_k\in\partial S$, then $p_k=\rho(\cos\theta,\sin\theta)$, with $\rho>0$ and $\theta\in\{\theta_0,\pi-\theta_0\}$. For instance, if $\theta=\theta_0$ then, as we proved previously, $(m,L)\not\in\partial S$ and, from Claim 1, there would exist a singular point $p_m\in S$ in the line $(1-\lambda)p_k+\lambda(m,L)$, with $\lambda<0$, such that $(m,L)\in\overline{\n_{p_m}}$. But this is impossible because every point in the half-line $(1-\lambda)p_k+\lambda(m,L)$ with $\lambda<0$ is contained in $\r^2\backslash S$, which contradicts the existence of $p_m$.

Assume $p_k\not\in\partial S$, then there exist two different singular points $p_m,p_M$ in the conditions of Claim 1 and Claim 2, respectively. But, this is a contradiction since $\n_{p_m}\cap\n_{p_M}\neq\emptyset$ (see Figure \ref{pm}).
\end{proof}

As a consequence we will prove that an admissible solution to (\ref{the-equation}), with a point $p\in\n$ such that $r(p)$ is a line, must be given by Case 2 in Theorem \ref{t2} or by Example \ref{ej1}.
\begin{cor}\label{c2}
Let $u:\r^2\backslash\{p_1,\ldots,p_n\}\fl\r$ be an admissible ${\cal C}^2$ solution of (\ref{the-equation}) with isolated singularities at the points $p_i$. If there exists a point $p\in\n$ such that $r(p)$ is a line, then there are exactly one or two singular points.
\end{cor}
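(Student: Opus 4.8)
The plan is to produce a maximal cylindrical strip and then count the singularities trapped in its complement. First I would apply Proposition \ref{max-strip} to the given $p\in\n$ with $r(p)$ a line, obtaining the maximal open strip $S(p)$ on which $u$ is cylindrical. Since there is at least one singularity, $S(p)\neq\r^2$, so $S(p)$ is either a genuine strip bounded by two parallel lines or a half-plane bounded by a single line, and by maximality every boundary line of $S(p)$ carries at least one singular point. The first observation is that in each connected component of $\r^2\backslash\overline{S(p)}$ every $q\in\n$ has $r(q)$ equal to a half-line ending at a singularity: items 1) and 2) of Proposition \ref{max-strip} forbid any line of $\n$ or of $\u$ outside $S(p)$, while Proposition \ref{l5} forbids segments. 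Hence beyond each boundary line the graph is assembled from cones (Corollary \ref{c1}) and planar pieces.

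The heart of the argument is to show that each such complementary region contributes exactly one singularity, lying on the adjacent boundary line. I would first rule out singularities lying strictly inside a complementary half-plane $U$: assuming one exists, I would run the highest-point mechanism of Lemma \ref{angulo}, choosing a closed sector of angle less than $\pi$ with vertex at a boundary singularity that captures all the singularities of $\overline{U}$ together with their fans $\n_{p_j}$, checking $\partial S\cap\n=\emptyset$ and $\n_{p_j}\subseteq S$, and invoking Lemma \ref{angulo} to reach a contradiction. Once all singularities are known to sit on the boundary lines, I would bound their number per line: if $a,b$ both lie on a boundary line $\ell$, Lemma \ref{desconexion} places ${\cal V}_a$ and ${\cal V}_b$ in disjoint arcs separated by the directions $\pm(b-a)$, so the two cones open away from one another and the half-strip erected over the segment $ab$ into $U$ receives no cone generator; it therefore lies in $\u$, making $u$ linear there and contradicting admissibility. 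Thus each boundary line carries exactly one singularity.

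Counting the two cases then finishes the proof: a half-plane $S(p)$ has a single boundary line and yields one singularity, recovering Case 2 of Theorem \ref{t2}, whereas a genuine strip has two boundary lines and yields two singularities, recovering Example \ref{ej1}; in either case $n\in\{1,2\}$. I expect the main obstacle to be the construction of the angle-less-than-$\pi$ sector required by Lemma \ref{angulo}, since a complementary half-plane has angle exactly $\pi$. The delicate point is that a cone generator in the complement cannot be parallel to $r(p)$: otherwise it would either complete to a full line of $\n$, forbidden outside $S(p)$ by Proposition \ref{max-strip}, or force a planar half-strip contradicting admissibility. This direction control confines the relevant half-line directions to an arc strictly shorter than $\pi$ and is what actually unlocks Lemma \ref{angulo}; admissibility enters throughout only to discard genuine planar half-strips.
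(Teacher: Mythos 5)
Your skeleton is the paper's: pass to the maximal strip $S(p)$ of Proposition \ref{max-strip}, observe that outside it every $r(q)$ is a half-line ending at a singularity, and then show that each complementary closed half-plane $H_0$ contains only the one singular point $p_{i_0}$ forced onto its boundary line $r_0$. The gap is exactly at the step you flag yourself: how to manufacture a closed sector of angle strictly less than $\pi$ satisfying \emph{all} the hypotheses of Lemma \ref{angulo}. Your proposed fix --- ``no cone generator in the complement can be parallel to $r(p)$'' --- does not hold up for the reasons you give: item 1) of Proposition \ref{max-strip} forbids full \emph{lines} of $\n$ outside $S(p)$, but a half-line $r(q)$ with endpoint at a singularity and direction parallel to $r(p)$ is not a line and is not excluded by that item, and a single such generator does not by itself create a planar half-strip. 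Moreover, even if each individual direction avoided the parallel one, the set of directions could accumulate at it, so you would only confine the fans to the \emph{open} half-circle, not to a closed arc of length less than $\pi$; and an arbitrary sector cut out by a direction bound gives you no control on the hypothesis $\partial S\cap\n=\emptyset$, which is essential for Lemma \ref{angulo} (its proof runs half-lines into $\partial S$ and needs them to land in $\u$).

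The paper's construction sidesteps all of this: take a connected component $V$ of $\n_{p_{i_0}}$ (non-empty by Corollary \ref{c1}); it is an open sector of positive angle with vertex $p_{i_0}$ contained in $H_0$. The closure of each component of $H_0\backslash(V\cup\{p_{i_0}\})$ with non-empty interior is then automatically a closed sector of angle less than $\pi$, and its boundary lies in $r_0\cup\partial V$, which is contained in $\u$ (by Lemma \ref{l4} for the $r_0$ part, and because $V$ is a connected component of the open set $\n_{p_{i_0}}$ for the other part). Lemma \ref{angulo} then kills every singularity in $H_0$ other than $p_{i_0}$ in one stroke --- interior ones and ones on $r_0$ alike --- which also makes your separate Lemma \ref{desconexion} argument unnecessary. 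That separate argument is itself not sound as stated: Lemma \ref{desconexion} only places ${\cal V}_a$ and ${\cal V}_b$ in disjoint arcs, and depending on where the separating direction falls, generators from $a$ or $b$ can perfectly well enter the half-strip over the segment $ab$, so that region need not lie in $\u$. You would need the Lemma \ref{angulo} mechanism there too.
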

\begin{proof}
From Proposition \ref{max-strip}, there exists an open strip $S(p)$ containing $r(p)$ such that its boundary in $\r^2$ is given by one or two lines parallel to $r(p)$. Moreover, if $r_0$ is a line of $\partial S(p)$ then $r_0$ contains at least a singular point $p_{i_0}$.

We need to show that if $H_0$ is the closed half-plane determined by $r_0$ which does not contain $S(p)$ then the unique singular point in $H_0$ is $p_{i_0}$.

Let $V$ be a connected component of $\n_{p_{i_0}}$. The set $V$ is an open sector contained in $H_0$ with vertex at $p_{i_0}$.

If $H_0\backslash V=r_0$ then there is no singular point different from $p_{i_0}$. Otherwise, if there exists another singular point $p_j\in H_0$ then $p_j\in r_0$ and the non empty set $\n_{p_j}$ would intersect $\n_{p_{i_0}}$ or $S(p)$.

If $H_0\backslash V\neq r_0$, then $H_0\backslash (V\cup\{p_{i_0}\})$ has one or two connected components with non empty interior. The adherence of each component with non empty interior is a closed sector with vertex at $p_{i_0}$ in the conditions of Lemma \ref{angulo}. Therefore, there is no singular point in $H_0$ except $p_{i_0}$.
\end{proof}

As a main result in this Section, we classify all the admissible solutions to the degenerate Monge-Ampère equation with more than two singularities (see Figure \ref{four-vertex}).
\begin{figure}[!h]
\centerline{\includegraphics[scale =0.5]{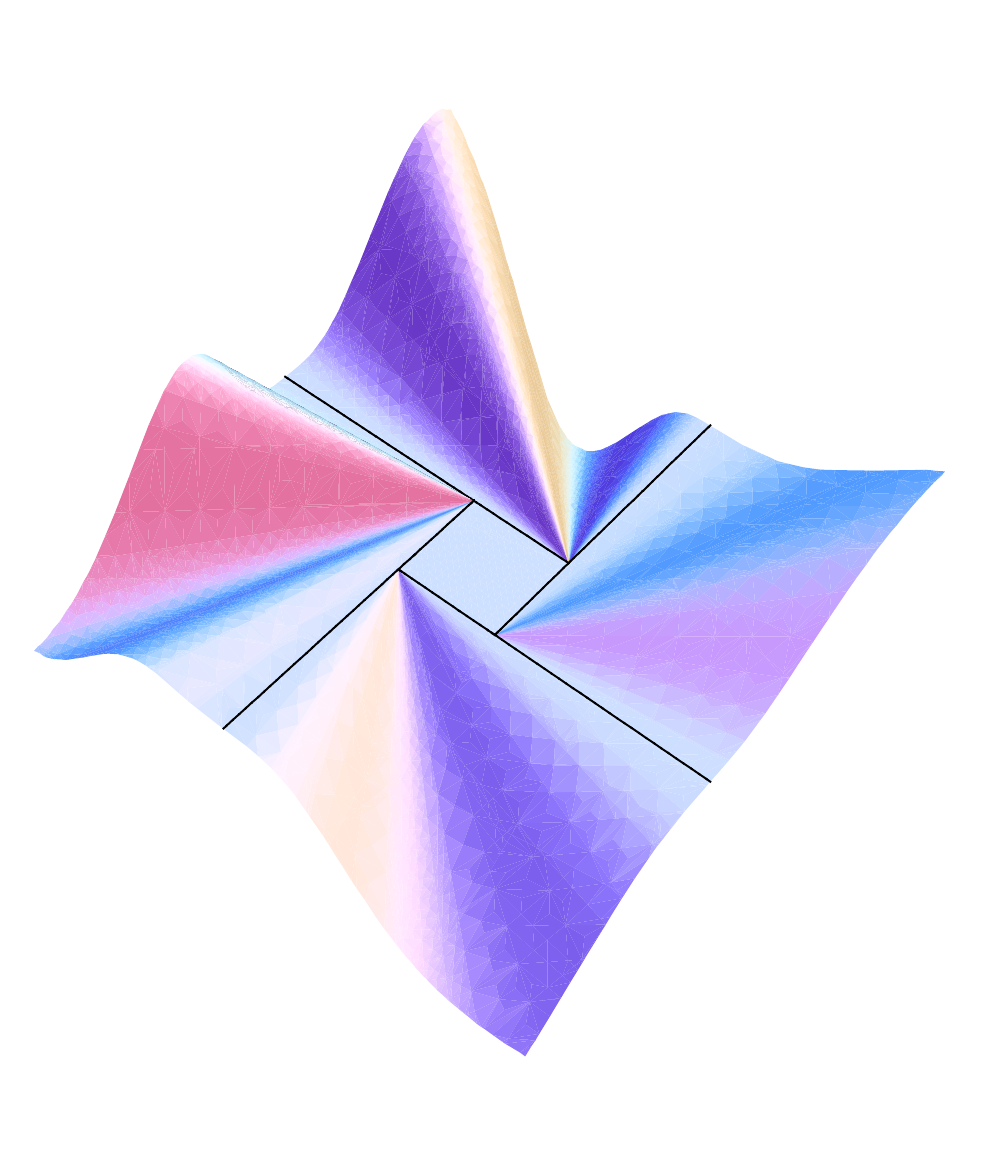}}
\caption[]{Admissible solution with four singular points.}
\label{four-vertex}
\end{figure}
\begin{teo}
Let $u:\r^2\backslash\{p_1,\ldots,p_n\}\fl\r$ be an admissible ${\cal C}^2$ solution of (\ref{the-equation}) with isolated singularities at the points $p_i$ and $n\geq3$. Then,
\begin{enumerate}
\item The singular points are the vertices of a convex compact polyhedron $C$ with non empty interior, and $u$ is linear on $C$.
\item The singular points can be numbered following an orientation of $\partial C$ in such a way that $u$ is a conical function on the open sector determined by the half-lines
$$
(1-\lambda)p_i+\lambda p_{i+1},\ \lambda\geq 0,\qquad\text{and}\qquad (1-\lambda)p_{i-1}+\lambda p_{i},\ \lambda\geq 1,
$$
with vertex at $p_i$. (Here, $p_0:=p_n$ and $p_{n+1}:=p_1$.)
\end{enumerate}
\end{teo}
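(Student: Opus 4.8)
The plan is to reduce at once to the purely conical situation and then read off the combinatorics of the singular points from the angular data ${\cal V}_{p_i}\subseteq\s^1$. First I would note that, since $n\geq 3$ and $u$ is admissible, Corollary \ref{c2} forbids any $p\in\n$ with $r(p)$ a line; together with Proposition \ref{l5} (which rules out segments) this forces $r(p)$ to be a half-line with endpoint at a singular point for \emph{every} $p\in\n$. Hence $\n=\bigcup_i\n_{p_i}$ and, by Corollary \ref{c1}, $u$ is conical on each connected component of each $\n_{p_i}$, with the directions of its rays recorded by the nonempty open sets ${\cal V}_{p_i}$.

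The key structural input is that the arcs ${\cal V}_{p_i}$ tile the circle. On one hand, Lemma \ref{desconexion} shows that the ${\cal V}_{p_i}$ lie in pairwise disjoint connected arcs, so they inherit a cyclic order around $\s^1$; I would use this order to number the singular points $p_1,\dots,p_n$. On the other hand, admissibility forces $\bigcup_i\overline{{\cal V}_{p_i}}=\s^1$: if an open arc of directions met no ${\cal V}_{p_i}$, then every far-away point $q$ in the corresponding angular wedge would lie in $\u$ (otherwise $r(q)$ would be a half-line to some singular point, whose direction, for $q$ far enough, is essentially the angular position of $q$ and hence belongs to some ${\cal V}_{p_i}$ inside the wedge, a contradiction); this produces a sector at infinity contained in $\u$, i.e.\ a half-strip on which $u$ is linear, contradicting admissibility. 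Combining the two facts, the ${\cal V}_{p_i}$ are $n$ consecutive open arcs whose closures cover $\s^1$, and Lemma \ref{angulo} (applied to the resulting closed sectors, which have angle less than $\pi$) guarantees that no singular point other than $p_i$ lies in the $i$-th sector, so that the $p_i$ are exactly the points to be ordered.

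Next I would turn the shared boundary direction $w_i$ between consecutive arcs ${\cal V}_{p_i}$ and ${\cal V}_{p_{i+1}}$ into an edge. Near $w_i$ the extremal boundary rays of the two conical sectors are the half-line from $p_i$ and the half-line from $p_{i+1}$, both in direction $w_i$. By Lemma \ref{l4} these boundary points are umbilical, and if the two parallel rays were distinct they would bound a half-strip inside $\u$; admissibility again forbids this, so the rays must be collinear. Therefore $p_{i+1}-p_i$ is parallel to $w_i$ and the segment $[p_i,p_{i+1}]$ is an edge with direction $w_i$. Since the directions $w_1,\dots,w_n$ rotate monotonically once around $\s^1$ (this is precisely the cyclic order of the arcs), the edges close up into a polygon turning consistently in one sense, and with $n\geq 3$ this monotone full turn cannot be realized by a collinear configuration; hence $C$ is a convex compact polyhedron with nonempty interior whose vertices are the $p_i$. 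The two bounding half-lines of $\n_{p_i}$ are then exactly the directions $w_{i-1}=p_i-p_{i-1}$ and $w_i=p_{i+1}-p_i$, which is the sector claimed in part (2).

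Finally, for part (1) it remains to check that $u$ is linear on $C$. Each $\n_{p_i}$ is the \emph{exterior} corner sector at $p_i$, bounded by the two extended edges, and is therefore disjoint from the interior of the convex polygon; thus $\mathrm{int}(C)\subseteq\mathrm{int}(\u)$, and since $\mathrm{int}(C)$ is connected while $u$ is linear on each connected component of $\mathrm{int}(\u)$, continuity yields that $u$ is linear on all of $C$. The main obstacle I expect is the middle step: upgrading the pairwise separation of Lemma \ref{desconexion} and the covering at infinity into a clean, gap-free cyclic tiling, while simultaneously using Lemma \ref{angulo} to exclude stray singular points inside the sectors, so that the collinear-ray argument can be run at \emph{every} consecutive pair and the edges genuinely assemble into a single convex polygon rather than a non-convex or disconnected configuration.
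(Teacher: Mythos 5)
Your overall strategy is the same as the paper's: reduce to $\n=\bigcup_i\n_{p_i}$ via Proposition \ref{l5} and Corollary \ref{c2}, show that the conical sectors account for all directions (you do this with the arcs ${\cal V}_{p_i}\subseteq\s^1$ and a wedge at infinity, the paper with a large circle ${\cal S}$ and the arcs $\overline{\n_{p_i}\cap{\cal S}}$; these are equivalent, and your density-at-infinity argument for $\bigcup_i\overline{{\cal V}_{p_i}}=\s^1$ is a legitimate variant of the paper's half-strip argument), and then read the edges off the shared boundary directions of consecutive sectors. Two steps, however, are not right as written. First, your invocation of Lemma \ref{angulo} ``applied to the resulting closed sectors, which have angle less than $\pi$'' does not work: nothing so far guarantees that $\overline{\n_{p_i}}$ has angle $<\pi$ (the paper only proves angle $\le\pi$, and does so by applying Lemma \ref{angulo} to the \emph{complement} of $\overline{\n_{p_i}}$, not to the sector itself); moreover the hypothesis ``$\n_{p_j}\subseteq S$ for every singular $p_j\in S\setminus\{p_{j_0}\}$'' fails for $S=\overline{\n_{p_i}}$, since the neighbour $p_{i+1}$ turns out to lie on $\partial\overline{\n_{p_i}}$ while $\n_{p_{i+1}}$ lies outside it, and for the same reason the conclusion you want (no singular point other than $p_i$ in the closed sector) is actually false.

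Second, and this is the substantive gap, your edge argument only yields that the two parallel boundary rays in the common direction $w_i$ are \emph{collinear}, i.e.\ that $p_{i+1}-p_i$ is parallel to $w_i$; it does not determine the sign. A priori $p_{i+1}$ could lie on the backward ray $p_i-\lambda w_i$, $\lambda>0$, and then the ``edges'' need not assemble into a convex polygon at all. This is exactly where the paper does real work (Claim 1 of its proof): for the first consecutive pair the sign is fixed by choosing the orientation of ${\cal S}$, and for every subsequent pair the wrong sign is excluded by showing it would force $\overline{\n_{p_{i-1}}}\cap\overline{\n_{p_{i+1}}}\neq\emptyset$, contradicting the separation given by Lemma \ref{desconexion}. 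You explicitly flag this assembly step as ``the main obstacle'' at the end, but you do not resolve it, so the proof is incomplete precisely there. The remaining ingredients --- the tiling of $\s^1$, the collinearity of adjacent boundary rays via a forbidden half-strip in $\u$, the monotone rotation of the $w_i$ giving convexity and non-degeneracy, and $\mathrm{int}(C)\subseteq\u$ giving linearity on $C$ --- are sound and match the paper's Claim 2 and its concluding paragraph.
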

\begin{proof}
Observe that, from Proposition \ref{l5} and Corollary \ref{c2}, if $p\in\n$ then $r(p)$ must be a half-line, that is, $\n=\cup_{i=1}^n {\n}_{p_i}$.

Let ${\cal S}$ be a circle of $\r^2$ such that the singular points are contained  in the interior of the bounded component determined by ${\cal S}.$ Let $${\cal S}_{p_i}=\overline{\n_{p_i}\cap {\cal S}}.$$
We want to show that ${\cal S}=\cup_{i=1}^n {\cal S}_{p_i}$.

Assume the open set ${\cal S}\backslash\cup_{i=1}^n {\cal S}_{p_i}$ is not empty, then we consider an open connected component  $U$ of this set. Let $q_1,q_2\in{\cal S}$ be the points in $\overline{U}\backslash U$. From the definition of ${\cal S}_{p_i}$ there exist two sequences $x_m^1, x_m^2\in{\cal S}\cap\n$ such that $x_m^j$ tends to $q_j$.

Passing to a subsequence if necessary, we can assume the half-lines $r(x_m^1)$ have the same singular end point $p_{i_1}$. Analogously, the half-lines $r(x_m^2)$ have a common singular end point $p_{i_2}$. But $U$ and the two half-lines
$$
(1-\lambda)p_{i_j}+\lambda q_j, \ \lambda\geq1,\quad j=1,2,
$$
determine a domain $V\subseteq\r^2$ whose boundary is included in $\u$ and every singular point is outside $V$. So, $V\subseteq \u$ and, since $V$ contains a half-strip, the solution $u$ cannot be admissible. This is a contradiction, hence ${\cal S}=\cup_{i=1}^n {\cal S}_{p_i}.$

Now, let us see that ${\cal S}_{p_i}$ is a connected set for each $i$.

Assume there exists $i_0$ such that ${\cal S}_{p_{i_0}}$ is not connected, then we can consider two different connected components $V_1,V_2$ of $\overline{\n_{p_{i_0}}}\backslash\{p_{i_0}\}$, where for instance $V_1$ has non empty interior (see Figure \ref{otramas}).

\begin{figure}[!h]
\centerline{\includegraphics[scale =0.3]{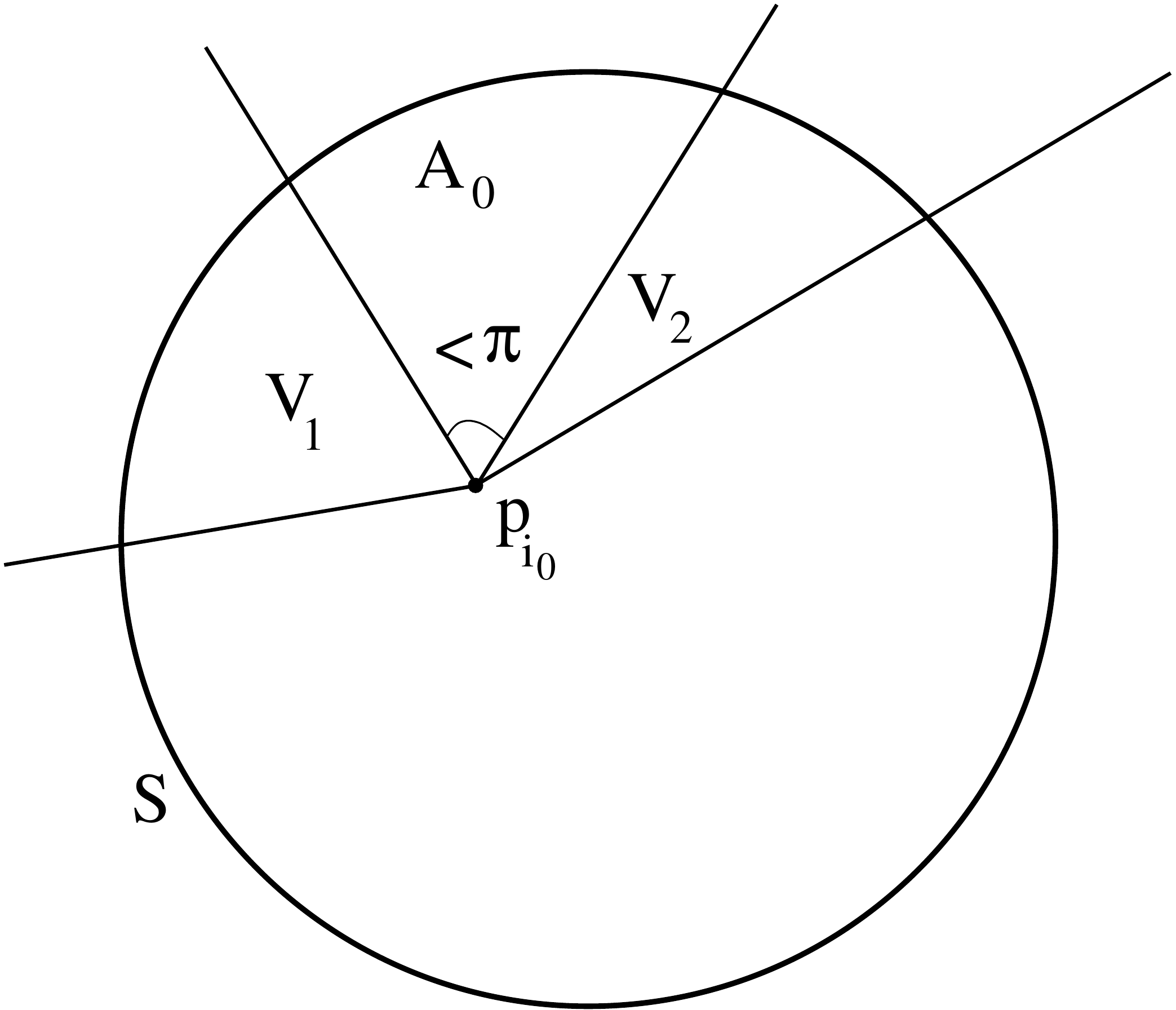}}
\caption[]{connection of  ${\cal S}_{p_{i_0}}$}
\label{otramas}
\end{figure}

Then $\r^2\backslash(V_1\cup V_2\cup\{p_{i_0}\})$ is made of two disjoint open sectors with vertex at $p_{i_0}$, where one of them, say $A_0$, has angle less than $\pi$, since the interior of $V_1$ is non empty. Using Lemma \ref{angulo} for the adherence of $A_0$ we obtain that the unique isolated singularity in $\overline{A_0}$ is $p_{i_0}$. Hence, $A_0\cap{\cal S}\subseteq {\cal S}_{p_{i_0}}$, because  ${\cal S}=\cup_{i=1}^n {\cal S}_{p_i}$. This contradicts that $V_1$ and $V_2$ are disjoint connected components.

As ${\cal S}_{p_i}$ is a connected set, we obtain that $\overline{\n_{p_i}}$ is a closed sector with vertex at $p_i$. Moreover, let us check that $\overline{\n_{p_i}}$ must have angle less than or equal to $\pi$. In order to see that, suppose the angle of $\overline{\n_{p_i}}$ is bigger than $\pi$, then using Lemma \ref{angulo} to the adherence of $\r^2\backslash\overline{\n_{p_i}}$ we would have that $p_i$ is the unique singular point, which contradicts the existence of at least three singularities.

Now,
following an orientation of $\mathcal S,$ we can adjust the indices in such a way that  ${\mathcal S}_{p_i}$ is adjacent to ${\mathcal S}_{p_{i-1}}$ and ${\mathcal S}_{p_{i+1}},$ $i=1,\dots, n.$
Let $\alpha_i,\beta_i\in \partial {\mathcal S}_{p_i}$ such that  $\alpha_i=\beta_{i-1},$ $i=1,\dots,n$ (see Figure \ref{convexity}).

\

{\it Claim 1.} Up to reversing the orientation of $\mathcal S$,  we can assume that, for any  $i=1,\dots, n$, the point $p_{i+1}$ belongs to the segment

\begin{equation*}
(1-\lambda)p_i+\lambda \beta_i, \  \ 0<\lambda<1.
\end{equation*}

We first prove that the point $p_2$ belongs to the half-line $(1-\lambda)p_1+\lambda \beta_1,$  $\lambda<1.$

There exists a sequence of point $x^2_m\in {\mathcal S}_{p_2}\cap{\mathcal N}$
such that $x^2_m$ tends to $\beta_1$  and $r(x^2_m)$ is a  half-line with endpoint  at $p_2.$ If $p_2$ does not belong to the  half-line  through $\beta_1$ and $p_1,$ with endpoint at $\beta_1,$
then  for $m$ sufficiently large, $r(x^2_m)$ would intersect the sector $\overline{{\mathcal N}_{p_1}}.$  This is a contradiction.

Now, if $p_2$   does not belong to the segment between $\beta_1$ and $p_1,$ then $p_1$  belongs to the segment between $p_2$ and
$\alpha_2=\beta_1$  and  we get  the desired result by changing the orientation of $\mathcal S.$

By the same argument used for the point $p_2,$ we can prove that $p_3$ belongs to the line trough   $\beta_2$ and $p_2.$
Moreover, $p_3$ must belong to the open segment between $\beta_2$ and $p_2,$ because on the contrary one has
$\overline{{\mathcal N}_{p_1}}\cap \overline{{\mathcal N}_{p_3}}\not=\emptyset,$  that is a contradiction.

Analogously, one can prove that for any $i=3,\dots,n-1,$ the point $p_{i+1}$  belongs to the open segment between $\beta_i$ and $p_i ,$ that proves Claim 1.

\

{\it Claim 2.} The points $p_1,\dots, p_n$  are the vertices of a convex compact polyhedron  $C.$

\begin{figure}[!h]
\centerline{\includegraphics[scale =0.3]{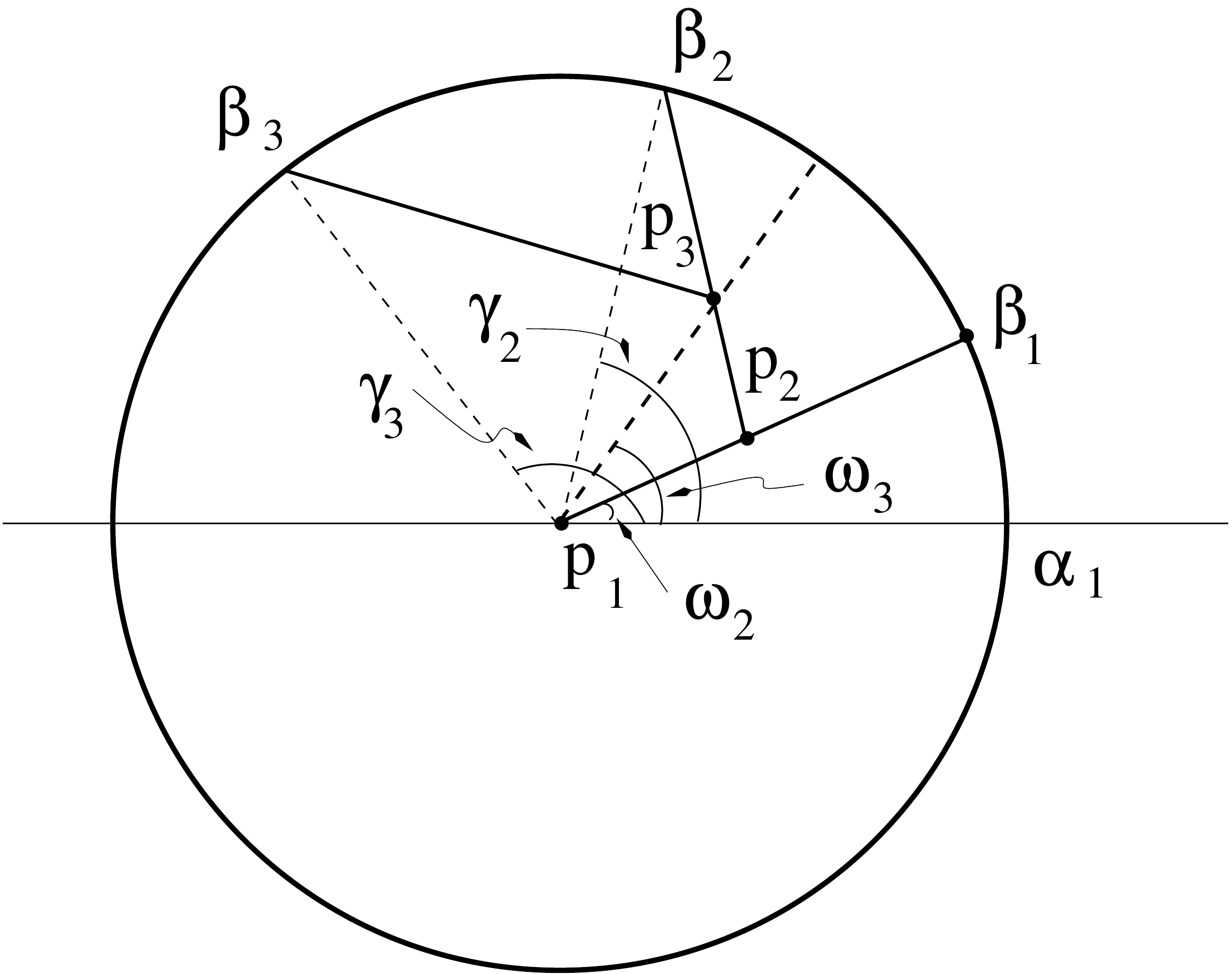}}
\caption[]{$C$ is convex}
\label{convexity}
\end{figure}

Up to a translation  and a rotation, we can assume that $p_1$ is the origin and that $\alpha_1$ is the intersection between the positive $x$-axis and $\mathcal S$ (see Figure \ref{convexity}).   For  $i=2,\dots,n,$ denote by $\omega_i$ the angle that the line through $p_1$ and $p_i$ does with the
$x$-axis   and by  $\gamma_i$ the angle that the line through $p_1$ and $\beta_i$ does with the
$x$-axis (both angles with the orientation induced by the orientation of $\mathcal S$). It is straightforward to  verify that

\begin{equation}
\label{angle-ineq}
0<\omega_2<\omega_3<\gamma_2<\omega_4<\gamma_3<\omega_5<\gamma_4<\omega_6<\dots<\omega_n<\gamma_n=2\pi
\end{equation}
where the last equality is  because $\beta_n=\alpha_1.$
In particular $\omega_n=\pi$, since $p_1$ is contained in the segment between $p_n$ and $\beta_n=\alpha_1$.

Notice that, inequality  \eqref{angle-ineq} implies that, for $i=2\dots,n-1,$ one has $\omega_i<\pi.$ Hence  all the $p_i,$  $i=2,\dots, n-1$ lie in the same open half-plane with boundary the  straight line through $p_1$ and $p_n.$ Analogously, for each $i=2,\ldots,n$ the straight line through $p_{i-1}$ and $p_i$ determines an open half-plane where the rest of singular points are included. This shows, that the intersection of the corresponding closed half-planes determine a convex compact polyhedron $C$ with non empty interior.

Finally, we have proved that, for any singular point $p_i,$ the subset ${\mathcal N}_{p_i}$  does not intersect the convex polyhedron $C.$ Since
${\mathcal N}=\cup_{i=1}^n {\mathcal N}_{p_i},$ one has that $C\subseteq {\mathcal U}$ and hence $u$ is linear in $C,$ as desired.
\end{proof}
Let us conclude with some remarks. On the one hand, an analogous result for global solutions to the degenerate Monge-Ampère equation with isolated singularities in higher dimensions would be desirable, although it seems to be more difficult. On the other hand, we believe that our method can be extended to determine the solutions to the degenerate Monge-Ampère equation in the 
 plane $\r^2$ with some convex sets  removed. In the elliptic case, some results in this sense are in \cite{JX}, where  the authors study Monge-Ampère equations with isolated and lines singularities. Finally, it should be mentioned that some interesting global results were obtained in \cite{MU} for flat surfaces where the set of singularities can be large.


\begin{thebibliography}{99999}

\bibitem{ACG}\textsc{J.A. Aledo, R.M.B. Chaves, J.A. Gálvez:} {\em The Cauchy Problem for Improper Affine Spheres and the Hessian One Equation,} Trans. Amer. Math. Soc. 359 (2007), 4183--4208.

\bibitem{B1}\textsc{R. Beyerstedt:} {\em Removable singularities of solutions to elliptic Monge-Ampère equations,} Math. Z. 208 (1991), 363--373.

\bibitem{B2}\textsc{R. Beyerstedt:} {\em The behaviour of solutions to elliptic Monge-Ampère equations at singular points,} Math. Z. 216 (1994), 243--256.



\bibitem{GHM}\textsc{J.A. Gálvez, L. Hauswirth, P. Mira:} {\em Surfaces of constant curvature in $\r^3$ with singularities,} Adv. Math. 241 (2013), 103--126.

\bibitem{GJM}\textsc{J.A. Gálvez, A. Jiménez, P. Mira:} {\em A Classification of Isolated Singularities
of Elliptic Monge-Ampère Equations in Dimension Two,} Comm. Pure Appl. Math. 68 (2015), 2085--2107.

\bibitem{GMM}\textsc{J.A. Gálvez, A. Martínez, P. Mira:} {\em The space of solutions to the Hessian one equation in the finitely punctured plane,} J. Math Pures Appl. 84 (2005), 1744--1757.

\bibitem{GM}\textsc{J.A. Gálvez, P. Mira:} {\em Embedded isolated singularities of flat surfaces in hyperbolic
3-space,} Calc. Var. PDEs 24 (2005),  239--260.

\bibitem{HN}\textsc{P. Hartman, L. Nirenberg:} {\em On spherical image maps whose Jacobians
do not change sign,} American J. Math., 81, 4 (1959) 901--920.

\bibitem{HB}\textsc{E. Heinz, R. Beyerstedt:} {\em Isolated singularities of Monge-Ampère equations,} Calc. Var. PDEs 2 (1994), 241--247.

\bibitem{JX}\textsc{T. Jin, J. Xiong:} {\em Solutions of some Monge-Ampère equations with isolated and line singularities,} preprint (available at http://arxiv.org/abs/1212.4206).

\bibitem{J}\textsc{K. J\"orgens:} {\em Harmonische Abbildungen und die Differentialgleichung $rt-s^2=1$,} Math. Annalen
129 (1955), 330--344.

\bibitem{Ma} \textsc{W.S. Massey:} {\em Surfaces of Gaussian curvature zero in Euclidean  space,} Tohoku Math. J. 14 (1962) 73-79.

\bibitem{Mi1}\textsc{F. Milán:} {\em Singularities of improper affine maps and their Hessian equation,} J. Math. Anal. Appl. 405 (2013), 183--190.

\bibitem{Mi2}\textsc{F. Milán:} {\em The Cauchy problem for indefinite improper affine spheres and their Hessian equation,} Adv. Math. 251 (2014), 22--34.

\bibitem{MU}\textsc{S. Murata, M. Umehara:} {\em Flat surfaces with singularities in Euclidean 3-space,} J. Differential Geom. 82, 2 (2009) 279-316.

\bibitem{P}\textsc{A.V. Pogorelov:} {\em  Exterior geometry of convex surfaces,} Nauka, Moscow (1969) (Russian).

\bibitem{SW}\textsc{F. Schulz, L. Wang:} {\em Isolated singularities of Monge-Ampère equations,} Proc. Amer. Math. Soc.
123 (1995), 3705--3708.

\bibitem{S}\textsc{J.J. Stoker:} {\em Developable surfaces in the large,} Comm.  Pure Appl. Math. 14 (1961) 627--635.

\bibitem{Sa1}\textsc{I. Kh. Sabitov:} {\em A generalization of the Pogorelov-Stocker theorem on complete developable surfaces,}
J. Math. Sciences, Vol. 149, No. 1  (2008) 1028--1031.

\bibitem{Sa2}\textsc{I. Kh. Sabitov:} {\em Isometric Immersions and Embeddings of Locally Euclidean Metrics,} Reviews in Mathematics and Mathematical Physics, vol. 13, Cambridge Scientific Publishers, 2008.

\bibitem{U}\textsc{V. Ushakov:} {\em The explicit general solution of trivial Monge-Ampère equation,} Comment. Math. Helv. 75 (2000) 125--133.

\end{thebibliography}
\end{document}